\def\x{{\boldsymbol{x}}}
\def\e{{\rm e}}
\def\eps{\varepsilon}
\def \di{{\boldsymbol{\delta}}}
\def\d{{\rm d}}
\def\dist{{\rm dist}}
\def\ddt{\frac{\d}{\d t}}
\def\R {\mathbb{R}}
\def\N {\mathbb{N}}
\def\HH{{\rm H}}
\def\H {{\mathcal H}}
\def\Ht {{\mathcal H_t}}
\def\B {{B}}
\def\Co {{\mathcal C}}
\def\BB {{\mathbb B}}
\def\D {{\rm dom}}
\def\E {{\mathcal E}}
\def\KK {{\mathbb{K}}}
\def\A {{ A}}
\def\AA{{\mathfrak A}}
\def\K {{K}}
\def\L {{\mathcal L}}
\def\Q {{\mathcal Q}}
\def \U {{\mathcal O}}
\def\X {{X}}
\def\Y {{\boldsymbol{\Sigma}}}
\def \l {\langle}
\def \r {\rangle}
\def \and{\quad\text{and}\quad}
\def \au {\rm}
\def \ti {\it}
\def \jou {\rm}
\def \bk {\it}
\def \no#1#2#3 {{\bf #1} (#3), #2.}
\def \eds#1#2#3 {#1, #2, #3.}
\newtheorem{proposition}{Proposition}[section]
\newtheorem{theorem}[proposition]{Theorem}
\newtheorem{corollary}[proposition]{Corollary}
\newtheorem{lemma}[proposition]{Lemma}
\theoremstyle{definition}
\newtheorem{definition}[proposition]{Definition}
\newtheorem{remark}[proposition]{Remark}
\numberwithin{equation}{section}
\title[Attractors for processes on time-dependent spaces]
{Attractors for processes on time-dependent spaces.\\
Applications to wave equations}
\author[M. Conti, V. Pata and R. Temam]
{Monica Conti, Vittorino Pata and Roger Temam}
\address{Politecnico di Milano - Dipartimento di Matematica ``F.\ Brioschi''
\newline\indent
Via Bonardi 9, 20133 Milano, Italy}
\email{monica.conti@polimi.it}
\email{vittorino.pata@polimi.it}
\address{Indiana University - Institute for Scientific Computing and Applied Mathematics
\newline\indent
Rawles Hall, Bloomington, IN 47405, USA}
\email{temam@indiana.edu}
\subjclass[2000]{35B41, 37B55, 35L05}
\keywords{Nonautonomous dynamical systems, time-dependent attractors, wave equation}
\begin{document}

\begin{abstract}
For a process $U(t,\tau) : X_\tau\to \X_t$ acting on a one-parameter family of normed spaces, we
present a notion of time-dependent attractor based only on the minimality with respect to
the pullback attraction property. Such an attractor is shown to be invariant whenever
$U(t,\tau)$ is $T$-closed for some $T>0$, a much weaker property than continuity (defined in the text).
As a byproduct,
we generalize the recent theory of attractors in time-dependent spaces developed in \cite{oscillon}.
Finally, we exploit the new framework to study the longterm behavior
of wave equations with time-dependent speed of propagation.
\end{abstract}

\maketitle

%%%%%%%%%%%%%%%%%%%%%%%%%%%%%%%%%%%%%%%%%%%%%%%%%
\section{Introduction}

\noindent
The evolution of systems arising from mechanics and
physics is described in many instances by
differential equations
of the form
$$
\begin{cases}
u_t = A(u, t),\quad  t>\tau,\\
u(\tau)=u_\tau\in X,
\end{cases}
$$
where $X$ is a normed space and, for every fixed $t$, $A(\cdot,t)$ is a densely defined operator on $X$.
Assuming the Cauchy problem well posed and calling $u(t)$  the solution at time $t$,
we can construct the family of solving
operators
$$
U(t, \tau): X \to X,\quad t\geq \tau\in\R,
$$
by setting
$$U(t, \tau)u_\tau=u(t).$$
Such a family is called a \emph{process},
characterized by the properties that $U(\tau,\tau)=I$ and
$$ U(t,s)U(s, \tau) = U(t,\tau), \quad \forall t\geq s \geq \tau\in\R.
$$
The issue of understanding the longtime behavior of solutions to
dynamical systems is thus translated into studying the dissipative
properties of the operators $U(t,\tau)$.
A well-established theory of attractors
provides nowadays a
full description of many important autonomous systems from mathematical physics,
including nonautonomous models with
time-dependent external forces
(see e.g.\ the classical textbooks \cite{BV,HAL,HAR,TEM}
and the more recent references \cite{CV,MZ,ROB}).
A paradigmatic example is given by the nonlinear damped wave equation
in a smooth bounded domain $\Omega\subset\R^3$
\begin{equation}
\label{MP}
\begin{cases}
\eps u_{tt}(\x,t)+u_t(\x,t)-\Delta u(\x,t)+f(u(\x,t))=g(\x,t),\\
u(\x,t)|_{\x\in\partial\Omega}=0,
\end{cases}
\end{equation}
where $\eps>0$, $f$ is a nonlinear term and $g$ an external given force.
If $g$ is independent of time, the system is autonomous
and the problem is completely understood within the framework
of semigroups, whereas the dependence of $g$ on time
requires further integrability assumptions and the theory
of attractors for processes, suitable to deal with nonautonomous
situations.

On the contrary, the standard theory generally fails to capture the
dissipation mechanism involved in evolution problems where the coefficients
of the differential operator depend explicitly on time, leading to
time-dependent terms at a \emph{functional level}.
This can be seen in the model equation \eqref{MP},
assuming that $\eps$ is not a constant, but rather
a positive decreasing function of time $\eps(t)$ vanishing at infinity.
In such a case, the natural (twice the) energy associated to the system reads
$$\E(t)=\int_\Omega |\nabla u(\x,t)|^2\,\d \x+\eps(t)\int_\Omega|u_t(\x,t)|^2\,\d \x,$$
which exhibits a structural dependence on time.
It is then easy to realize that the vanishing character of $\eps$
at infinity alters the dissipativity of the system
and prevents the existence of absorbing sets in the usual sense,
namely, bounded sets of the phase space
$X=H^1_0(\Omega)\times L^2(\Omega)$
absorbing all the trajectories after a certain period of time.

The first two authors believe that an essential progress in this direction has been made recently
in \cite{oscillon}, where the authors adopt the new point of view
of describing the solution operator as
a family of maps
$$U(t,\tau):X_\tau\to X_t,\quad t\geq \tau\in\R,$$
acting on a time-dependent family of spaces $X_t$.
For instance, in the model problem \eqref{MP}
all the spaces coincide with the linear space $X$, but the $X_t$-norm is dictated by
the time-dependent energy $\E(t)$ of the solution at time $t$.
Based on this idea, the paper \cite{oscillon} provides
a suitable modification of the notion of pullback attractor,
establishing a new theory of pullback flavor for
dynamical systems acting on time-dependent spaces.

\subsection*{Plan of the paper}
Our aim in this article is twofold: first, in the spirit of \cite{minimalia} (and \cite{oscillon,oscillon2}),
we give new insights on attractors on time-dependent spaces.
The main idea is to define the basic objects of the theory
(such as pullback absorbing and attracting sets, time-dependent attractors)
only in terms of their attraction properties. In particular,
the time-dependent attractor will be the smallest (pullback) attracting set,
which in turn implies its uniqueness.
Quite interestingly, here neither the  process is required to be continuous, nor
the attractor to be invariant by definition.
Indeed, we prove that the invariance property is automatically satisfied
by the attractor whenever the process $U(t,\tau)$ is $T$-closed for some $T>0$,
a much weaker condition than continuity (see Definition \ref{d-semicolsed} below).
As a byproduct, we recover and improve the results of \cite{oscillon}.
The second goal is to study the longtime dynamics of the
model problem \eqref{MP} with a time-dependent coefficient $\eps(t)$.
This is done in the last part of the paper, where,
by handling the system within the new framework,
we show the existence of a time-dependent attractor of optimal regularity.
%%%%%%%%%%%%%%%%%%%%%%%%%%%%%%%%%%%%%%%%%%%%%%%%%

%%%%%%%%%%%%%%%%%%%%%%%%%%%%%%%%%%%%%%%%%%%%%%%%%
\section{The Abstract Framework}

\noindent
For $t\in\R$, let $\X_t$ be a family of normed spaces without, so far, any other hypotheses on these
spaces.
We consider a two-parameter family of operators
$$U(t,\tau):\X_\tau\to\X_t,$$
depending on $t\geq\tau\in\R$, and
satisfying the following properties:
\begin{itemize}
\item[(i)] $U(\tau,\tau)$ is the identity map on ${\X_\tau}$;
\smallskip
\item[(ii)] $U(t,\tau)U(\tau,\sigma)=U(t,\sigma)$ for every $\sigma\in\R$ and every $t\geq\tau\geq\sigma$.
\smallskip
\end{itemize}
The family $U(t,\tau)$ will still be called a process.

\begin{remark}
We stress that the spaces $\X_t$ can be in principle
completely unrelated.
Besides, no
continuity property is assumed in the definition of the process.
\end{remark}

In the next sections we will provide an abstract setting in order to study
the asymptotic behavior of the operators $U(t,\tau)$ when $t\to+\infty$ and/or $\tau\to-\infty$.
The goal is to define a suitably ``thin"
object $\AA=\{A_t\}_{t\in\R}$, where each $A_t\subset \X_t$ is able to attract
(at time $t$) all the solutions of the system originating sufficiently far in the past.
This will be done in the spirit of \cite{minimalia}, leading to the
notion of time-dependent attractor in Definition \ref{d-attractor} below.
Then, in Section \ref{sec-defs}, we  state the main existence result for time-dependent attractors,
and in the subsequent Section \ref{sec-inv} we discuss the issue of their invariance.
Finally, in Section \ref{sec-further},
we complete the presentation with some comments and a comparison with the theory
of \cite{oscillon}.

\subsection*{Notation}
For every $t\in\R$, we introduce the $R$-ball of $X_t$
$$\BB_t(R)=\big\{z\in\X_t:\,\|z\|_{\X_t}\leq R\big\}.$$
For any given $\eps>0$,
the $\eps$-neighborhood of a set $B\subset \X_t$ is defined as
$$\U^\eps_t(B)=\bigcup_{x\in B}\,\big\{y\in \X_t:\, \|x-y\|_{\X_t}<\eps\big\}
=\bigcup_{x\in B}\,\big\{x+\BB_t(\eps)\big\}.$$
We denote the Hausdorff semidistance
of two (nonempty) sets $B,C\subset \X_t$ by
$$\di_t(B,C)=\sup_{x\in B}\,\dist_{\X_t}(x,C)=\sup_{x\in B}\,\inf_{y\in C}\,\|x-y\|_{\X_t}.$$
Finally, given any set $B\subset \X_t$, the symbol $\overline{B}$ stands for the closure of $B$ in $X_t$.
%%%%%%%%%%%%%%%%%%%%%%%%%%%%%%%%%%%%%%%%%%%%%%%%%

%%%%%%%%%%%%%%%%%%%%%%%%%%%%%%%%%%%%%%%%%%%%%%%%%
\section{Pullback Attracting Sets}
\label{sec-defs}

\noindent
We begin with some definitions.

\begin{definition}
\label{d-ubdd}
A family $\mathfrak C=\{C_t\}_{t\in\R}$ of bounded sets $C_t\subset\X_t$ is called {\it uniformly bounded}
if there exists $R>0$ such that
$$C_t\subset \BB_t(R),\quad\forall t\in\R.$$
\end{definition}

\begin{definition}
\label{d-absorbing}
A family $\mathfrak B=\{\B_t\}_{t\in\R}$ is called {\it pullback absorbing}
if it is uniformly bounded and, for every $R>0$, there exists $t_0=t_0(t,R)\leq t$ such that
\begin{equation}
\label{entering}
\tau\leq t_0
\quad\Rightarrow\quad
U(t,\tau)\BB_\tau(R)\subset \B_t.
\end{equation}
The process $U(t,\tau)$ is called \emph{dissipative} whenever
it admits a pullback absorbing family.
\end{definition}

\begin{definition}
\label{d-attraction}
A (uniformly bounded) family $\mathfrak K=\{\K_t\}_{t\in\R}$ is called
\emph{pullback attracting} if
for all $\eps>0$ the family
$\{\U^\eps_t(\K_t)\}_{t\in\R}$ is pullback absorbing.
\end{definition}

\begin{remark}
\label{rem-attr}
The attracting property  can be equivalently stated in terms of the Hausdorff semidistance:
$\mathfrak K=\{\K_t\}_{t\in\R}$ is pullback attracting if and only if it is uniformly bounded and
the limit
$$\lim_{\tau\to-\infty}\,\di_t(U(t,\tau) C_\tau,\K_t)=0$$
holds for every uniformly bounded family $\mathfrak C=\{C_t\}_{t\in\R}$ and every $t\in\R$.
\end{remark}

We can describe the pullback attraction in term of sequences. To this aim,
let $\Y_t$ denote  the collection of all possible sequences of the form
$$y_n=U(t,\tau_n)x_n,$$
where $\tau_n\to-\infty$ and $x_n\in \X_{\tau_n}$ is any uniformly bounded sequence. For any $y_n\in\Y_t$ we denote
$$\L_t(y_n)=\big\{x\in \X_t:\, \|y_{n_\imath}- x\|_{\X_t}\to 0 \,\text{
for some subsequence $n_\imath\to\infty$} \big\}.
$$
It is immediately seen from the definitions that a uniformly bounded family
$\mathfrak K=\{\K_t\}_{t\in\R}$ is pullback attracting if and only if
\begin{equation}
\label{l-sca}
\dist_{\X_t} (y_n,\K_t)\to 0,\quad\forall y_n\in \Y_t,
\end{equation}
for all $t\in \R$.
In particular, each element of $\L_t(y_n)$ belongs to the closure
of $\K_t$. Therefore, setting
$$\A^\star_t=\bigcup_{y_n\in\Y_t}\L_t(y_n),$$
we have proved

\begin{lemma}
\label{l-mini}
Assume that there exists a pullback attracting family
of closed sets $\mathfrak K=\{\K_t\}_{t\in\R}$. Then
$$\A^\star_t\subset \K_t,\quad\forall t\in\R.$$
\end{lemma}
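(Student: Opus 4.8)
The plan is to deduce the inclusion directly from the sequential characterization of pullback attraction recorded in \eqref{l-sca}, using nothing more than the fact that each $\K_t$ is closed. I would fix an arbitrary $t\in\R$ and an arbitrary point $x\in\A^\star_t$; by the very definition of $\A^\star_t$ there is a sequence $y_n\in\Y_t$ with $x\in\L_t(y_n)$, that is, $y_{n_\imath}\to x$ in $\X_t$ along some subsequence $n_\imath\to\infty$.

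Since $\mathfrak K$ is pullback attracting and $y_n\in\Y_t$, the characterization \eqref{l-sca} yields $\dist_{\X_t}(y_n,\K_t)\to 0$, and in particular $\dist_{\X_t}(y_{n_\imath},\K_t)\to 0$ along the chosen subsequence. To transfer this to the limit point $x$, I would invoke the elementary $1$-Lipschitz property of the distance function,
$$
\big|\dist_{\X_t}(y_{n_\imath},\K_t)-\dist_{\X_t}(x,\K_t)\big|\leq \|y_{n_\imath}-x\|_{\X_t}.
$$
Letting $\imath\to\infty$, the right-hand side tends to zero while the first term inside the absolute value already tends to zero, forcing $\dist_{\X_t}(x,\K_t)=0$.

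Finally, because $\K_t$ is closed, a point at zero distance from $\K_t$ must belong to $\K_t$, whence $x\in\K_t$. As $x\in\A^\star_t$ was arbitrary this gives $\A^\star_t\subset\K_t$, and since $t\in\R$ was arbitrary the inclusion holds for every $t$. I do not anticipate a genuine obstacle here: the statement is a routine combination of the definition of $\A^\star_t$, the sequential reformulation \eqref{l-sca} of the attracting property, and the closedness hypothesis — indeed the paragraph preceding the lemma already contains this reasoning. The only point deserving a word of care is the passage to the convergent subsequence $y_{n_\imath}$ together with the observation that the distance to $\K_t$ still vanishes along it; the closedness of $\K_t$ is precisely what upgrades ``zero distance'' to ``membership''.
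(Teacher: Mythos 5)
Your argument is correct and is exactly the paper's own reasoning: the paragraph preceding the lemma observes that, by the sequential characterization \eqref{l-sca}, every element of $\L_t(y_n)$ lies in the closure of $\K_t$, and closedness then yields membership in $\K_t$ itself. Your write-up merely makes the ``closure'' step explicit via the $1$-Lipschitz property of the distance function, which is a fine (and standard) way to justify it.
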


\begin{lemma}
\label{l-astar}
If the process $U(t,\tau)$ is dissipative, then ${\mathfrak A}^\star=\{A^\star_t\}_{t\in\R}$ coincides
with the time-dependent $\omega$-limit of any pullback absorbing set $\mathfrak B=\{\B_t\}_{t\in\R}$,
that is:
\begin{equation}
\label{e-astar}
A^\star_t=
\bigcap_{y\leq t}\,\overline{\bigcup_{\tau\leq y}\,U(t,\tau)\B_\tau}.
\end{equation}
In particular, $A^\star_t$ is closed and contained in $\overline{\B_t}$ for all $t\in\R$;
hence ${\mathfrak A}^\star$ is uniformly bounded.
\end{lemma}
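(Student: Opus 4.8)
The plan is to prove the identity \eqref{e-astar} by establishing the two inclusions separately, after which closedness, the bound by $\overline{\B_t}$, and uniform boundedness will all follow by inspection. Throughout, let $R_0>0$ be a uniform bound for the absorbing family, so that $\B_\tau\subset\BB_\tau(R_0)$ for every $\tau\in\R$, and write
$$\omega_t=\bigcap_{y\leq t}\,\overline{\bigcup_{\tau\leq y}\,U(t,\tau)\B_\tau}$$
for the right-hand side of \eqref{e-astar}. Since the inner unions shrink as $y$ decreases, $\omega_t$ is a nested intersection of closed sets, hence closed.

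For the inclusion $\omega_t\subset\A^\star_t$ I would argue directly from the definition of $\Y_t$. If $x\in\omega_t$, then choosing $y=t-k$ with $k\in\N$, the point $x$ lies in the closure of $\bigcup_{\tau\leq t-k}U(t,\tau)\B_\tau$, so there exist $\tau_k\leq t-k$ and $b_k\in\B_{\tau_k}$ with $z_k:=U(t,\tau_k)b_k$ satisfying $\|x-z_k\|_{\X_t}<1/k$. Since $\tau_k\to-\infty$ and $\|b_k\|_{\X_{\tau_k}}\leq R_0$ uniformly, the sequence $(z_k)$ belongs to $\Y_t$ and converges to $x$; thus $x\in\L_t(z_k)\subset\A^\star_t$.

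The reverse inclusion $\A^\star_t\subset\omega_t$ is the heart of the matter and the only place where dissipativity enters. Fix $x\in\A^\star_t$, so that $x\in\L_t(y_n)$ for some $y_n=U(t,\tau_n)x_n$ with $\tau_n\to-\infty$ and $\|x_n\|_{\X_{\tau_n}}\leq R$; pass to a subsequence with $y_{n_\imath}\to x$. I must show that $x\in\overline{\bigcup_{\tau\leq y}U(t,\tau)\B_\tau}$ for every fixed $y\leq t$. The key device is the cocycle identity (ii) together with the absorbing property read at time $y$: by Definition \ref{d-absorbing} there is $t_0=t_0(y,R)\leq y$ such that $U(y,\sigma)\BB_\sigma(R)\subset\B_y$ whenever $\sigma\leq t_0$. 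For $\imath$ large enough that $\tau_{n_\imath}\leq t_0$ (which also forces $\tau_{n_\imath}\leq y$), the element $b_\imath:=U(y,\tau_{n_\imath})x_{n_\imath}$ lies in $\B_y$, and (ii) yields
$$y_{n_\imath}=U(t,\tau_{n_\imath})x_{n_\imath}=U(t,y)U(y,\tau_{n_\imath})x_{n_\imath}=U(t,y)b_\imath\in\bigcup_{\tau\leq y}U(t,\tau)\B_\tau.$$
Letting $\imath\to\infty$ and using $y_{n_\imath}\to x$ gives $x\in\overline{\bigcup_{\tau\leq y}U(t,\tau)\B_\tau}$; as $y\leq t$ was arbitrary, $x\in\omega_t$.

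Finally, the remaining assertions come for free. Closedness of $\A^\star_t=\omega_t$ has already been noted. For the bound I would apply Definition \ref{d-absorbing} with the uniform radius $R_0$: there is $t_0=t_0(t,R_0)\leq t$ with $U(t,\tau)\B_\tau\subset U(t,\tau)\BB_\tau(R_0)\subset\B_t$ for all $\tau\leq t_0$, so $\bigcup_{\tau\leq y}U(t,\tau)\B_\tau\subset\B_t$ for every $y\leq t_0$, whence $\A^\star_t=\omega_t\subset\overline{\B_t}$. Since $\B_t\subset\BB_t(R_0)$ and the ball is closed, $\overline{\B_t}\subset\BB_t(R_0)$, so $\AA^\star$ is uniformly bounded by the same $R_0$. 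I expect the single genuine difficulty to be the factorization in the reverse inclusion: one must take the intermediate time to be exactly the target $y$, so that absorption places the tail element in $\B_y$ and the cocycle identity re-expresses $y_{n_\imath}$ as a point of $U(t,y)\B_y$.
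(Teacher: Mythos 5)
Your proof is correct and follows the same route the paper intends: the paper simply declares the identity \eqref{e-astar} ``a direct consequence of the definitions'' and only spells out the self-absorption argument for the inclusion $A^\star_t\subset\overline{\B_t}$, which you reproduce verbatim. Your two-inclusion argument --- in particular the factorization $y_{n_\imath}=U(t,y)U(y,\tau_{n_\imath})x_{n_\imath}$ combined with absorption read at the intermediate time $y$ --- is exactly the omitted verification, carried out correctly.
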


\begin{proof}
The validity of \eqref{e-astar} is a direct consequence of the definitions, so $A^\star_t$
is obviously closed.
Besides, since $\mathfrak B$ is uniformly bounded, it absorbs itself and
$$U(t,\tau)\B_\tau\subset \B_t,\quad \forall \tau\leq t_0,$$
for some $t_0=t_0(t,{\mathfrak B})\leq t$,
implying the inclusion $A^\star_t\subset \overline{\B_t}$.
\end{proof}

The next lemma characterizes the attraction property for compact sets.

\begin{lemma}
\label{UNO}
Let $\mathfrak K=\{\K_t\}_{t\in\R}$ be a uniformly bounded family
of compact sets. Then $\mathfrak K$ is pullback attracting if and only if for all $t\in\R$
$$\emptyset\neq \L_t(y_n)\subset \K_t,\quad \forall y_n\in\Y_t.$$
\end{lemma}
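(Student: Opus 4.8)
The plan is to read everything through the sequential characterization of attraction recorded in \eqref{l-sca}, namely that $\mathfrak K$ is pullback attracting if and only if $\dist_{\X_t}(y_n,\K_t)\to 0$ for every $y_n\in\Y_t$ and every $t\in\R$. Both implications then become a matter of translating the statement about $\L_t(y_n)$ into one about these semidistances, with the compactness of the sets $\K_t$ serving as the bridge between the two formulations.

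For the direct implication, assume $\mathfrak K$ is pullback attracting. The inclusion $\L_t(y_n)\subset\K_t$ is essentially Lemma \ref{l-mini}: since each $\K_t$ is compact it is in particular closed, so every subsequential limit of an admissible sequence lies in $\overline{\K_t}=\K_t$; equivalently $\L_t(y_n)\subset\A^\star_t\subset\K_t$. To obtain nonemptiness I would fix $y_n\in\Y_t$ and use compactness in earnest: by \eqref{l-sca} we have $\dist_{\X_t}(y_n,\K_t)\to 0$, and since $\K_t$ is compact the infimum defining each of these distances is attained at some $k_n\in\K_t$ with $\|y_n-k_n\|_{\X_t}=\dist_{\X_t}(y_n,\K_t)\to 0$. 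Compactness again yields a subsequence $k_{n_\imath}\to k\in\K_t$, whence $\|y_{n_\imath}-k\|_{\X_t}\to 0$ and $k\in\L_t(y_n)$, so that $\L_t(y_n)\neq\emptyset$.

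For the converse, suppose $\emptyset\neq\L_t(y_n)\subset\K_t$ for all $y_n\in\Y_t$, and argue by contradiction, assuming $\mathfrak K$ fails to be pullback attracting. By \eqref{l-sca} there are $t\in\R$ and $y_n\in\Y_t$ with $\dist_{\X_t}(y_n,\K_t)\not\to 0$; passing to a subsequence (still denoted $y_n$) we may assume $\dist_{\X_t}(y_n,\K_t)\geq\delta$ for some $\delta>0$ and all $n$. The key observation is that this subsequence, being again of the form $U(t,\tau_n)x_n$ with $\tau_n\to-\infty$ and $\{x_n\}$ uniformly bounded, is still an element of $\Y_t$, so the hypothesis applies to it: $\L_t(y_n)\neq\emptyset$ furnishes a further subsequence $y_{n_\imath}\to x$ with $x\in\K_t$. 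Then $\dist_{\X_t}(y_{n_\imath},\K_t)\leq\|y_{n_\imath}-x\|_{\X_t}\to 0$, contradicting $\dist_{\X_t}(y_n,\K_t)\geq\delta$. Hence $\mathfrak K$ is pullback attracting.

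The argument is mostly bookkeeping around \eqref{l-sca} and Lemma \ref{l-mini}; I expect the only genuine subtlety to be the nonemptiness in the direct implication, which is exactly where compactness of $\K_t$ is indispensable—without it, a sequence with $\dist_{\X_t}(y_n,\K_t)\to 0$ could fail to possess any convergent subsequence, leaving $\L_t(y_n)$ empty even for an attracting family. The secondary point to handle with care in the converse is the remark that a subsequence of an element of $\Y_t$ is itself an element of $\Y_t$, which is what licenses invoking the hypothesis on the extracted subsequence.
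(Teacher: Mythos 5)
Your proof is correct and follows essentially the same route as the paper's: both directions are read off from the sequential characterization \eqref{l-sca}, with compactness of $\K_t$ used to extract a convergent subsequence of near-minimizers (giving nonemptiness of $\L_t(y_n)$) and the converse handled by contradiction via a subsequence bounded away from $\K_t$. Your explicit remark that a subsequence of an element of $\Y_t$ is again in $\Y_t$ is a small piece of bookkeeping the paper leaves implicit, but the argument is the same.
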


\begin{proof}
Let $\mathfrak K=\{\K_t\}_{t\in\R}$ be a family of compact sets.
If $\mathfrak K$ is pullback attracting, then given $y_n\in \Y_t$ we have
$$\L_t(y_n)\subset \K_t\quad\text{and}\quad \dist_{\X_t} (y_n,\xi_n)\to 0,$$
for some $\xi_n\in \K_t$. Since $\K_t$ is compact, there exists $\xi\in \K_t$
such that (up to a subsequence)
$$\xi_n\to\xi\in \K_t\quad\Rightarrow\quad y_n\to\xi\quad\Rightarrow\quad \L_t(y_n)\neq\emptyset.$$
Conversely,
if $\mathfrak K$ is not pullback attracting, we deduce from \eqref{l-sca} that
$$\dist_{\X_t}(y_n,\K_t)>\eps,$$
for some $t\in\R$, $\eps>0$ and $y_n\in\Y_t$. Therefore, $\L_t(y_n)\cap \K_t=\emptyset$.
\end{proof}
%%%%%%%%%%%%%%%%%%%%%%%%%%%%%%%%%%%%%%%%%%%%%%%%%

%%%%%%%%%%%%%%%%%%%%%%%%%%%%%%%%%%%%%%%%%%%%%%%%%
\section{Time-Dependent Global Attractors}

\noindent
It is clear from the earlier discussion that a pullback
attracting family of compact sets is capable of controlling the regime
of the system at any time $t\in\R$. This leads quite naturally to the definition
of an attractor as the smallest set possessing such a property. To this aim we consider
the collection
\begin{equation}
\label{setk}
\KK=\big\{\mathfrak K=\{\K_t\}_{t\in\R}\,: \K_t\subset \X_t \text{ compact,}\;
\mathfrak K \text{ pullback attracting}\big\}.
\end{equation}

\smallskip
\noindent
When $\KK\neq \emptyset$ we say that the process is \emph{asymptotically compact}.

\begin{definition}
\label{d-attractor}
We call a \emph{time-dependent global attractor} the smallest element of $\KK$, i.e. the family
$\AA=\{\A_t\}_{t\in\R}\in\KK$ such that
$$\A_t\subset \K_t,\quad\forall t\in\R,$$
for any element $\mathfrak K=\{\K_t\}_{t\in\R}\in\KK$.
\end{definition}

The next result tells that the definition is consistent: the minimal element of $\KK$
exists (and it is unique) if and only if $\KK$ is not empty.

\begin{theorem}
\label{MAA}
If $U(t,\tau)$ is asymptotically compact, then the time-dependent
attractor $\AA$ exists and coincides with the set $\AA^\star=\{\A^\star_t\}_{t\in\R}$.
In particular, it is unique.
\end{theorem}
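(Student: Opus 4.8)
The plan is to prove Theorem~\ref{MAA} by showing that the candidate family $\AA^\star=\{\A^\star_t\}_{t\in\R}$ is itself an element of $\KK$, and then invoking Lemma~\ref{l-mini} to conclude minimality. First I would observe that since $U(t,\tau)$ is asymptotically compact, $\KK\neq\emptyset$, so there exists at least one pullback attracting family $\mathfrak K=\{\K_t\}_{t\in\R}$ of compact sets. In particular such a family is uniformly bounded, hence it is pullback absorbing in the sense needed to apply Lemma~\ref{l-astar}: the process is dissipative, and so formula \eqref{e-astar} holds and guarantees that each $\A^\star_t$ is closed and that $\AA^\star$ is uniformly bounded. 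This disposes of the easy structural properties and reduces the theorem to two remaining assertions: that $\AA^\star\in\KK$ (i.e. $\AA^\star$ is pullback attracting with each $\A^\star_t$ compact), and that $\AA^\star$ is the smallest such family.

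\smallskip
\noindent
For minimality, Lemma~\ref{l-mini} does essentially all the work: any pullback attracting family of closed sets $\mathfrak K$ satisfies $\A^\star_t\subset\K_t$ for every $t$, and since every member of $\KK$ consists of compact (hence closed) sets, we immediately get $\A^\star_t\subset\K_t$ for all $\mathfrak K\in\KK$. Thus, once we know $\AA^\star\in\KK$, it is automatically the smallest element, which also forces uniqueness. So the real content is to verify membership $\AA^\star\in\KK$.

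\smallskip
\noindent
The hard part will be proving that $\AA^\star$ is pullback attracting and that each $\A^\star_t$ is compact; both of these I expect to extract from asymptotic compactness combined with the sequential characterization via $\L_t(y_n)$ and Lemma~\ref{UNO}. The strategy is to fix a pullback attracting family of compact sets $\mathfrak K\in\KK$ and use it as a ``trap'': for any sequence $y_n=U(t,\tau_n)x_n\in\Y_t$, the attraction property \eqref{l-sca} gives $\dist_{\X_t}(y_n,\K_t)\to 0$, and compactness of $\K_t$ yields a convergent subsequence with limit in $\K_t$, so $\L_t(y_n)\neq\emptyset$. This shows on the one hand that $\A^\star_t=\bigcup_{y_n\in\Y_t}\L_t(y_n)$ is nonempty and, being contained in $\K_t$ by the observation after \eqref{l-sca}, that $\A^\star_t$ is a closed subset of a compact set, hence compact. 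On the other hand, to see that $\AA^\star$ is itself pullback attracting I would argue by contradiction using Lemma~\ref{UNO}: if it were not, there would exist $t\in\R$, $\eps>0$ and some $y_n\in\Y_t$ with $\dist_{\X_t}(y_n,\A^\star_t)>\eps$; but passing to a subsequence converging to a point of $\L_t(y_n)\subset\A^\star_t$ contradicts this separation. The main subtlety to handle carefully is the bookkeeping between arbitrary uniformly bounded data $x_n\in\X_{\tau_n}$ and the compact trap $\K_t$, ensuring that no continuity of $U(t,\tau)$ is used anywhere; the entire argument must rest solely on the attraction property and on compactness.
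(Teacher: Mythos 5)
Your proposal is correct and follows essentially the same route as the paper: dissipativity plus Lemma~\ref{l-astar} for closedness and uniform boundedness, Lemmas~\ref{l-mini} and~\ref{UNO} for the inclusion $\emptyset\neq\A^\star_t\subset\K_t$ (hence compactness and minimality), and the sequential characterization \eqref{l-sca} for the attraction property, which you rightly flesh out with the subsequence/contradiction argument that the paper leaves implicit. The only nitpick is that $\mathfrak K$ itself need not be pullback absorbing; dissipativity comes from the fact that some fixed $\eps$-neighborhood family $\{\U^\eps_t(\K_t)\}_{t\in\R}$ is absorbing, which is what Definition~\ref{d-attraction} actually provides.
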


\begin{proof}
Let $\mathfrak K=\{\K_t\}_{t\in\R}$ be an element of $\KK$.
Then, by Lemmas \ref{l-mini} and \ref{UNO},
$$\emptyset\neq \A^\star_t\subset K_t,\quad \forall t\in\R.$$
Since $U(t,\tau)$ is dissipative, we know by Lemma \ref{l-astar}
that $\AA^\star$ is uniformly bounded and
$\A^\star_t$ is closed for all $t\in\R$.
Since $\A^\star_t$ is contained in the compact set $K_t$, then $\A^\star_t$ is compact as well.
The attraction property is contained in \eqref{l-sca}, saying that
$\AA^*$ is an element of $\KK$. Thanks to Lemma \ref{l-mini}
it is also the smallest element of $\KK$, hence it is the (unique)
time-dependent attractor by the very definition.
\end{proof}

We now provide a necessary condition for $\KK$ to be nonempty, which turns out to be sufficient
as well when the spaces $\X_t$ are complete.

\begin{definition}
\label{d-efone}
A process  $U(t,\tau)$ is {\it $\eps$-dissipative} if for every $t\in\R$ there exists a
set $F_t\subset\X_t$ made of a finite number of points such that
the family $\{\U_t^\eps(F_t)\}_{t\in\R}$ is pullback absorbing (cf.\ Definition 4 in \cite{minimalia}).
The process is called {\it totally dissipative} whenever
it is $\eps$-dissipative for every $\eps>0$. Note that the sets $F_t$ need not be the same
for all $\eps$.
\end{definition}

\begin{theorem}
\label{suffy}
Assume that $\X_t$ is a Banach space for all $t\in\R$.
Then $U(t,\tau)$ is totally dissipative if and only if $\KK\neq \emptyset$.
\end{theorem}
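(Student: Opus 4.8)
The plan is to prove the two implications separately, using completeness only for sufficiency; the necessity direction holds for arbitrary normed $\X_t$.

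\emph{Necessity} ($\KK\neq\emptyset\Rightarrow$ totally dissipative). I would fix $\mathfrak K=\{\K_t\}_{t\in\R}\in\KK$ and an arbitrary $\eps>0$. Since each $\K_t$ is compact, it admits a finite $\tfrac\eps2$-net $F_t^\eps\subset\K_t$, so that $\K_t\subset\U_t^{\eps/2}(F_t^\eps)$ and, by the triangle inequality, $\U_t^{\eps/2}(\K_t)\subset\U_t^{\eps}(F_t^\eps)$. The family $\{\U_t^{\eps/2}(\K_t)\}_{t\in\R}$ is pullback absorbing because $\mathfrak K$ is attracting, while $\{\U_t^{\eps}(F_t^\eps)\}_{t\in\R}$ is uniformly bounded (from $F_t^\eps\subset\K_t\subset\BB_t(R)$ one gets $\U_t^\eps(F_t^\eps)\subset\BB_t(R+\eps)$). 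Enlarging the sets of a uniformly bounded pullback absorbing family clearly preserves the absorption property in \eqref{entering}, so $\{\U_t^{\eps}(F_t^\eps)\}_{t\in\R}$ is pullback absorbing. As $\eps>0$ is arbitrary, $U(t,\tau)$ is totally dissipative, and no completeness was used.

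\emph{Sufficiency} (totally dissipative $\Rightarrow\KK\neq\emptyset$). Total dissipativity provides in particular one pullback absorbing family, so $U(t,\tau)$ is dissipative and Lemma \ref{l-astar} applies: $\AA^\star$ is well defined, uniformly bounded, and each $\A^\star_t$ is closed. I claim $\AA^\star\in\KK$. For compactness, fix $t$ and $\eps>0$ and take the finite set $F_t^\eps$ from $\eps$-dissipativity. By Lemma \ref{l-astar}, $\A^\star_t\subset\overline{\U_t^\eps(F_t^\eps)}$, which lies in the finite union $\bigcup_{f\in F_t^\eps}\{z\in\X_t:\|z-f\|_{\X_t}\leq\eps\}$ of closed $\eps$-balls. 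Thus $\A^\star_t$ is totally bounded, and being closed in the Banach space $\X_t$ it is compact. For the attraction property I use the sequential criterion \eqref{l-sca}. Given $y_n=U(t,\tau_n)x_n\in\Y_t$ with $x_n\in\BB_{\tau_n}(R)$, total dissipativity yields, for each scale $\eps=2^{-k}$, a tail beyond which $y_n\in U(t,\tau_n)\BB_{\tau_n}(R)\subset\U_t^{2^{-k}}(F_t^{2^{-k}})$; a pigeonhole selection over the finite set $F_t^{2^{-k}}$, iterated over $k$ and diagonalized, extracts a Cauchy subsequence of $y_n$. By completeness it converges to some $x\in\L_t(y_n)\subset\A^\star_t$, so $\L_t(y_n)\neq\emptyset$. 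Since every subsequence of $y_n$ is again in $\Y_t$ and hence clusters in $\A^\star_t$, a contradiction argument gives $\dist_{\X_t}(y_n,\A^\star_t)\to0$ for all $y_n\in\Y_t$; by \eqref{l-sca}, $\AA^\star$ is pullback attracting. Therefore $\AA^\star\in\KK$ and $\KK\neq\emptyset$.

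The crux, and the only place where completeness is indispensable, is this final extraction. The ``finite number of points'' in the definition of $\eps$-dissipativity is precisely a total-boundedness certificate for the tails of the sequences in $\Y_t$; completeness is exactly what upgrades total boundedness to genuine convergence, simultaneously delivering the nonemptiness and compactness of $\A^\star_t$ required for $\AA^\star$ to qualify as an element of $\KK$. Without completeness the Cauchy subsequences need not converge, so I would expect sufficiency to fail, while the necessity half remains untouched.
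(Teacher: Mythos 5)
Your proof is correct and follows essentially the same route as the paper's: the finite $\eps$-nets from total dissipativity certify total boundedness, completeness of $\X_t$ upgrades this to compactness and to convergence of subsequences, and the sequential criterion \eqref{l-sca} yields the attraction property. The only cosmetic difference is that you exhibit $\AA^\star$ as the element of $\KK$ via Lemma \ref{l-astar}, whereas the paper directly constructs the family $\K_t=\bigcap_{\eps>0}\overline{\U_t^\eps(F_t^\eps)}$.
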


\begin{proof}
If $\KK\neq\emptyset$, then $U(t,\tau)$ is totally dissipative.
Indeed, if $\mathfrak K=\{\K_t\}_{t\in\R}$ belongs to $\KK$, it follows that
any $\K_t$ can be covered by finitely many $\eps$-balls, and calling $F_t$ the union of the
centers of those balls, the family
$\{\U_t^\eps(F_t)\}_{t\in\R}$ is pullback absorbing.
Conversely, if $U(t,\tau)$ is totally dissipative, for any arbitrarily fixed $\eps>0$,
we can choose a finite set $F^\eps_t$
such that the family $\{\U^\eps_t(F^\eps_t)\}_{t\in\R}$ is uniformly bounded and absorbing.
If we select any $y_n\in\Y_t$, then  $y_n$ eventually falls into
$$V^\eps_t=\overline{\U^\eps_t(F^\eps_t)}.$$
Set
$$\K_t=\bigcap_{\eps>0}V^\eps_t.$$
Accordingly, the family
$\mathfrak K=\{\K_t\}_{t\in\R}$ is uniformly bounded.
Furthermore, both $\K_t$ and $\{y_n\}$ are coverable by finitely many
balls of arbitrarily small radius, which, in Banach spaces, means
precompactness.
In particular, $\K_t$ being closed, it is compact in $\X_t$.
Since the sequence $y_n$ is precompact, then $\L_t(y_n)$ is nonempty. Also, it is
contained in every closed set $V^\eps_t$ and hence in their intersection $\K_t$.
In other words,
$$\dist_{\X_t}( y_n, \K_t)\to 0,$$
meaning that $\mathfrak K$ is pullback attracting.
Hence $\mathfrak K\in\KK$.
\end{proof}

Collecting Theorem \ref{MAA} and \ref{suffy} we draw a corollary.

\begin{corollary}
If the family $U(t,\tau)$ is totally dissipative, then the time-dependent
attractor $\AA$ exists and coincides with the set
$\AA^\star$. In particular, it is unique and uniformly bounded.
\end{corollary}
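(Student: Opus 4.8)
The plan is to chain the two preceding theorems, since the corollary inherits the standing Banach-space hypothesis of Theorem \ref{suffy}. The first step is to invoke that result in the direction totally dissipative $\Rightarrow \KK\neq\emptyset$: the assumption that $U(t,\tau)$ is totally dissipative gives, via Theorem \ref{suffy}, that the collection $\KK$ of uniformly bounded pullback attracting families of compact sets is nonempty. By the terminology introduced just before Definition \ref{d-attractor}, this is exactly the assertion that the process is asymptotically compact.

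Second, with asymptotic compactness in hand I would apply Theorem \ref{MAA} essentially verbatim: it yields that the time-dependent attractor $\AA$ exists, that it coincides with $\AA^\star=\{\A^\star_t\}_{t\in\R}$, and that it is unique. This already settles the existence, the identification $\AA=\AA^\star$, and the uniqueness claims.

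Third, for the uniform boundedness I would observe that total dissipativity is strictly stronger than plain dissipativity. Indeed, fixing any single $\eps>0$, the $\eps$-dissipativity furnishes finite sets $F_t$ for which $\{\U_t^\eps(F_t)\}_{t\in\R}$ is pullback absorbing; in particular $U(t,\tau)$ admits a pullback absorbing family and is therefore dissipative in the sense of Definition \ref{d-absorbing}. Consequently Lemma \ref{l-astar} applies and guarantees that $\AA^\star$ is uniformly bounded, and since $\AA=\AA^\star$ the same holds for $\AA$.

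I do not expect any genuine obstacle here, as the corollary is a bookkeeping composition of Theorem \ref{MAA} and Theorem \ref{suffy}. The only points requiring a moment's care are to make the Banach-space hypothesis of Theorem \ref{suffy} explicit (it is used only for the implication totally dissipative $\Rightarrow\KK\neq\emptyset$, whose proof relies on precompactness being equivalent to total boundedness in a complete space), and to note that uniform boundedness is obtained from Lemma \ref{l-astar} rather than from Theorem \ref{MAA}, which asserts only existence, identification, and minimality.
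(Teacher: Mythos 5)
Your proposal is correct and follows exactly the route the paper intends: the corollary is stated as an immediate combination of Theorem \ref{suffy} (totally dissipative $\Rightarrow\KK\neq\emptyset$, under the Banach-space hypothesis) and Theorem \ref{MAA}, with uniform boundedness coming from Lemma \ref{l-astar} as in the proof of Theorem \ref{MAA}. Your remarks about where the completeness assumption is actually used and where the uniform boundedness comes from are accurate but add nothing beyond the paper's own chain of references.
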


\begin{remark}
A less direct
characterization of a totally dissipative process is also possible,
based on the {\it Kuratowski measure of noncompactness}
of a bounded set $B\subset \X_t$ (see \cite{HAL})
$$\boldsymbol{\alpha}_t(B)=\inf\big\{d>0: \text{$B$ has a finite covering by balls of
$\X_t$ of diameter less than $d$}\big\}.$$
Indeed, it is easily seen that the family
$U(t,\tau)$ is totally dissipative if and only if
there exists a
pullback absorbing set $\mathfrak B=\{\B_t\}_{t\in\R}$
for which
$$\lim_{\tau\to-\infty}\boldsymbol{\alpha}_t (U(t,\tau)\B_\tau)=0,\quad \forall t\in\R.
$$
\end{remark}
%%%%%%%%%%%%%%%%%%%%%%%%%%%%%%%%%%%%%%%%%%%%%%%%%

%%%%%%%%%%%%%%%%%%%%%%%%%%%%%%%%%%%%%%%%%%%%%%%%%
\section{Invariance of the Attractor}
\label{sec-inv}

\noindent
A further question is the invariance of the time-dependent global attractor.

\begin{definition}
We say that $\AA=\{\A_t\}_{t\in\R}$ is \emph{invariant} if
$$U(t,\tau)\A_\tau=\A_t, \quad \forall t\geq \tau.$$
\end{definition}

This property is usually a priori postulated in the literature. In particular, in \cite{oscillon}
the time-dependent attractor is by definition a family of compact
sets which is at the same time pullback attracting
and invariant, and its existence is proved by exploiting
the continuity of the process $U(t,\tau)$ (see
Theorem 2.1 and Remark 2.4 therein).

\begin{remark}
On the other hand, if we know that $\mathfrak K$ is an invariant pullback attracting family of compact sets,
it is clear that $\mathfrak K$ is the smallest element of $\KK$, hence it coincides with
the time-dependent attractor $\AA$.
\end{remark}

Our purpose here is to show that the time-dependent global attractor
provided by Theorem \ref{MAA} is automatically invariant whenever the process $U(t,\tau)$ is $T$-closed
for some $T>0$
in the sense of Definition \ref{d-semicolsed} below, a very mild continuity-like assumption.
We start with a sufficient condition.

\begin{proposition}
\label{propINV}
If there exists $T>0$ such that
$$\A_t\subset U(t,t-T)\A_{t-T},\quad
\forall t\in\R,$$
then $\AA$ is invariant.
\end{proposition}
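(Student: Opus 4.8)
The plan is to prove the two inclusions $U(t,\tau)\A_\tau\subset\A_t$ and $\A_t\subset U(t,\tau)\A_\tau$ for every $t\geq\tau$, treating the first as the heart of the matter. Throughout I rely on Theorem \ref{MAA}: $\AA=\AA^\star$ is uniformly bounded, say $\A_s\subset\BB_s(R)$ for all $s\in\R$, each $\A_s$ is compact, and $\AA$ is pullback attracting, so that \eqref{l-sca} is available. A preliminary observation, obtained by iterating the hypothesis and composing via property (ii), is that
$$\A_\sigma\subset U(\sigma,\sigma-kT)\A_{\sigma-kT},\qquad\forall\,\sigma\in\R,\ \forall\,k\in\N;$$
indeed, applying $U(\sigma,\sigma-(k-1)T)$ to the inclusion $\A_{\sigma-(k-1)T}\subset U(\sigma-(k-1)T,\sigma-kT)\A_{\sigma-kT}$ and using $U(\sigma,\sigma-(k-1)T)U(\sigma-(k-1)T,\sigma-kT)=U(\sigma,\sigma-kT)$ advances the induction.

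For forward invariance, fix $a\in\A_\tau$ and $t\geq\tau$. By the preliminary inclusion (with $\sigma=\tau$), for each $k$ there is a point $a_k\in\A_{\tau-kT}$ with $a=U(\tau,\tau-kT)a_k$, whence property (ii) gives $U(t,\tau)a=U(t,\tau-kT)a_k$. The decisive point, which bypasses the absence of any continuity of the process, is that the sequence $w_k:=U(t,\tau-kT)a_k$ is identically equal to $U(t,\tau)a$, yet since $\tau-kT\to-\infty$ and $\{a_k\}$ is uniformly bounded in $\BB_{\tau-kT}(R)$, it is a legitimate element of $\Y_t$. Hence \eqref{l-sca} forces $\dist_{\X_t}(w_k,\A_t)\to0$, and as $w_k\equiv U(t,\tau)a$ this yields $\dist_{\X_t}(U(t,\tau)a,\A_t)=0$. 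Since $\A_t$ is compact, and so closed, we conclude $U(t,\tau)a\in\A_t$, that is, $U(t,\tau)\A_\tau\subset\A_t$.

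For backward invariance, apply the preliminary inclusion with $\sigma=t$ to get $\A_t\subset U(t,t-kT)\A_{t-kT}$ for all $k$, and choose $k$ so large that $t-kT\leq\tau$. Splitting $U(t,t-kT)=U(t,\tau)U(\tau,t-kT)$ through property (ii) and inserting the forward invariance already established, in the form $U(\tau,t-kT)\A_{t-kT}\subset\A_\tau$, I obtain
$$\A_t\subset U(t,\tau)U(\tau,t-kT)\A_{t-kT}\subset U(t,\tau)\A_\tau.$$
Combined with the previous inclusion this gives $U(t,\tau)\A_\tau=\A_t$, which is the asserted invariance.

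I expect the only real obstacle to be the forward inclusion. Since the process is not assumed continuous, the naive idea of pushing a convergent approximating sequence for $a$ through $U(t,\tau)$ is unavailable; the argument above circumvents this by exhibiting the single point $U(t,\tau)a$ itself as a (constant) member of the pullback family $\Y_t$, so that the attraction property alone confines it to $\A_t$.
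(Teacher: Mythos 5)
Your proof is correct and follows essentially the same route as the paper's: iterate the hypothesis to represent points of the attractor as images of uniformly bounded data from arbitrarily far in the past, invoke the pullback attraction property together with closedness of $\A_t$ to get $U(t,\tau)\A_\tau\subset\A_t$, and then combine with the iterated inclusion to obtain the reverse containment. The only cosmetic difference is that you phrase the attraction step via the sequential characterization \eqref{l-sca} applied to a constant sequence in $\Y_t$, whereas the paper uses the equivalent Hausdorff-semidistance formulation at the level of sets.
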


\begin{proof}
Let $t\in\R$ be arbitrarily fixed.
For any $s\geq t$ and any $n\in\N$, we have by induction
\begin{equation}
\label{Uuno}
U(s,t) \A_t\subset U(s,t-T)\A_{t-T}\subset\cdots
\subset U(s,t-nT)\A_{t-nT}.
\end{equation}
Consequently,
$$\di_{s}(U(s,t)\A_t,\A_s)\leq \di_{s}(U(s,t-nT)\A_{t-nT},\A_s).
$$
Since $\AA$ is attracting, letting $n\to\infty$ we obtain
$$\di_{s}(U(s,t)\A_t,\A_s)=0,
$$
implying in turn, since $\A_s$ is closed, that
\begin{equation}
\label{Udue}
U(s,t)\A_t\subset \A_s,\quad\forall s\geq t.
\end{equation}
In particular, \eqref{Uuno}-\eqref{Udue}
for $s=t$ entail
$$\A_t\subset U(t,t-nT)\A_{t-nT}\subset \A_t,
$$
that is,
\begin{equation}
\label{Utre}
\A_t=U(t,t-nT)\A_{t-nT}.
\end{equation}
Let now $\tau\leq t$. Taking $n$ large enough, we infer from \eqref{Udue}-\eqref{Utre} that
$$\A_t=U(t,t-nT)\A_{t-nT}=U(t,\tau)U(\tau,t-nT)\A_{t-nT}\subset U(t,\tau)\A_\tau\subset \A_t,$$
proving the equality $U(t,\tau)\A_\tau=\A_t$.
\end{proof}

In order to establish an invariance criterion, we need one more definition.
Recall that, for any pair of fixed times $t\geq \tau$, the map $U(t,\tau) : \X_\tau\to \X_t$
is said to be closed if
$$
\begin{cases}
x_n\to x & \text{in }\X_\tau\\
U(t,\tau)x_n\to\zeta & \text{in }\X_t
\end{cases}\quad\Rightarrow\quad U(t,\tau)x=\zeta.
$$

\begin{definition}
\label{d-semicolsed}
The process
$U(t,\tau)$ is called
\begin{itemize}
\smallskip
\item[$\bullet$] \emph{closed} if
$U(t,\tau)$ is a closed map for any pair of fixed times $t\geq \tau$;
\smallskip
\item[$\bullet$] \emph{$T$-closed} for some $T>0$ if
$U(t,t-T)$ is a closed map for all $t$.
\end{itemize}
\end{definition}

\begin{remark}
Of course if the process  $U(t,\tau)$ is closed it is $T$-closed, for any $T>0$.
Note also that if the process $U(t,\tau)$ is a continuous (or even norm-to-weak continuous) map for all $t\geq \tau$,
then the process is closed.
\end{remark}

\begin{theorem}
\label{invariance-2}
If $U(t,\tau)$ is a $T$-closed process for some $T>0$, which possesses a time-dependent global attractor
$\AA$,
then $\AA$ is invariant.
\end{theorem}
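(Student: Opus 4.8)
The plan is to reduce everything to Proposition~\ref{propINV}: it suffices to verify the single inclusion
$$\A_t\subset U(t,t-T)\A_{t-T},\quad\forall t\in\R,$$
since invariance then follows for free. So I would fix $t\in\R$ and an arbitrary point $x\in\A_t$, and try to exhibit some $w\in\A_{t-T}$ with $U(t,t-T)w=x$.

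The starting point is the identification $\AA=\AA^\star$ furnished by Theorem~\ref{MAA}, which gives $x\in\A_t=\A^\star_t=\bigcup_{y_n\in\Y_t}\L_t(y_n)$. Hence there is a sequence $y_n=U(t,\tau_n)x_n\in\Y_t$, with $\tau_n\to-\infty$ and $x_n\in\X_{\tau_n}$ uniformly bounded, admitting a subsequence converging to $x$ in $\X_t$; after relabelling I assume $y_n\to x$. The key move is to split the process at the intermediate time $t-T$: for all $n$ large enough that $\tau_n\leq t-T$, property (ii) yields
$$y_n=U(t,\tau_n)x_n=U(t,t-T)\,U(t-T,\tau_n)x_n=U(t,t-T)w_n,\qquad w_n:=U(t-T,\tau_n)x_n.$$
By its very form, $\{w_n\}$ is an element of $\Y_{t-T}$.

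Next I would exploit the attraction of $\AA$ at the earlier time $t-T$. Since $w_n\in\Y_{t-T}$, the characterization \eqref{l-sca} gives $\dist_{\X_{t-T}}(w_n,\A_{t-T})\to0$; and because $\A_{t-T}$ is compact (being a member of $\KK$), I can pick $\xi_n\in\A_{t-T}$ with $\|w_n-\xi_n\|_{\X_{t-T}}\to0$ and extract a subsequence $\xi_{n_k}\to w\in\A_{t-T}$, whence also $w_{n_k}\to w$ in $\X_{t-T}$. Along this same subsequence $U(t,t-T)w_{n_k}=y_{n_k}\to x$ in $\X_t$. This is exactly the configuration to which $T$-closedness applies: the map $U(t,t-T)$ is closed, so from $w_{n_k}\to w$ and $U(t,t-T)w_{n_k}\to x$ I conclude $U(t,t-T)w=x$. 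Thus $x\in U(t,t-T)\A_{t-T}$, and since $x\in\A_t$ was arbitrary the desired inclusion holds; Proposition~\ref{propINV} then closes the argument.

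I expect the only delicate point to be the bookkeeping of subsequences: one must thread the subsequence realizing $x\in\L_t(y_n)$, the truncation to indices with $\tau_n\leq t-T$, and the compactness extraction producing $w$ so that convergence of $w_{n_k}$ and of its image $y_{n_k}$ occurs along one common subsequence. Everything else is forced by the definitions; in particular $T$-closedness is invoked exactly once, and only for the fixed pair $(t,t-T)$, which is precisely why a single $T>0$ suffices and full continuity of the process is never needed.
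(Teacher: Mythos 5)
Your proposal is correct and follows essentially the same route as the paper: reduce to Proposition~\ref{propINV}, split $y_n=U(t,\tau_n)x_n$ through the intermediate time $t-T$ to form $w_n=U(t-T,\tau_n)x_n\in\Y_{t-T}$, extract a limit $w\in\A_{t-T}$ via Lemma~\ref{UNO}, and conclude with $T$-closedness of $U(t,t-T)$. Your extra care with the subsequence bookkeeping simply makes explicit what the paper's citation of Lemma~\ref{UNO} compresses.
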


\begin{proof}
In view of Proposition~\ref{propINV},
it is enough to prove the inclusion
$$\A_t\subset U(t,t-T)\A_{t-T},\quad
\forall t\in\R.$$
To this end, select an arbitrary $y\in \A_t$. By Theorem~\ref{MAA},
$$y_n\to y\quad\text{for some}\quad y_n=U(t,\tau_n)x_n\in\Y_t.$$
Define the  sequence
$$w_n=U(t-T,\tau_n)x_n.$$
On account of
Lemma~\ref{UNO},
$$w_n\to w\quad\text{for some}\quad w\in \A_{t-T}.$$
On the other hand,
$$U(t,t-T)w_n=U(t,\tau_n)x_n=y_n\to y,
$$
and since $U(t,t-T)$ is closed we conclude that
$$U(t,t-T)w=y.$$
Therefore
$$y\in U(t,t-T)\A_{t-T},$$
yielding the desired inclusion.
\end{proof}

\begin{remark}
In fact, Theorem \ref{invariance-2} holds under a weaker continuity condition on $U(t,\tau)$.
It suffices to require the existence of
sequence
$$0=T_0<T_1<T_2<T_3\ldots \to\infty$$
with the following property: for all $k\in\N$
$$
\begin{cases}
x_n^k\to\xi_0^k & \text{in } X_{t-T_k}\\
U(t,t-T_k)x_n^k\to \xi^k & \text{in }\X_t
\end{cases}\quad\Rightarrow\quad U(t,t-T_k)\xi^k_0=\xi^k.
$$
If so, we call the process {\it asymptotically closed},
in analogy to the semigroup case
(see \cite{minimalia}).
\end{remark}
%%%%%%%%%%%%%%%%%%%%%%%%%%%%%%%%%%%%%%%%%%%%%%%%%

%%%%%%%%%%%%%%%%%%%%%%%%%%%%%%%%%%%%%%%%%%%%%%%%%
\section{Further Remarks}
\label{sec-further}

\subsection*{I} The notion of pullback absorber given in \cite{oscillon}
looks apparently different from
ours, and is based on the notion of a \emph{pullback-bounded family}, namely, a
family $\mathfrak B=\{\B_t\}_{t\in\R}$ satisfying
\begin{equation}
\label{RRR}
R(t)=\sup_{\tau\leq t}\|\B_\tau\|_{\X_t}<\infty,\quad \forall t\in\R.
\end{equation}
Accordingly, $\mathfrak B$ is called a
\emph{pullback absorber} if it is a pullback-bounded family with the following property: for every
$t\in\R$ and every pullback-bounded family $\mathfrak C=\{C_t\}_{t\in\R}$ there exists
$t_0=t_0(t,\mathfrak C)\leq t$ such that
\begin{equation}
\label{entering-oscillon}
\tau\leq t_0
\quad\Rightarrow\quad
U(t,\tau)C_\tau\subset B_t.
\end{equation}
Since any family of balls $\{\BB_t(R)\}_{t\in\R}$ is pullback-bounded,
\eqref{entering} obviously follows from \eqref{entering-oscillon}.
As a matter of fact, the two notions of absorbtion are equivalent.
Indeed, if $\mathfrak C$ is any pullback-bounded family with maximal size $R(t)$ on $(-\infty,t]$,
then
$$U(t,\tau)C_\tau\subset U(t,\tau)\BB_\tau(R(t)),\quad \forall \tau\leq t.$$
Hence, if $\mathfrak B$ is a pullback absorbing family in the sense of Definition \ref{d-absorbing}
and $t\in\R$ is any fixed time,
we have
$$\tau\leq t_0
\quad\Rightarrow\quad
U(t,\tau)C_\tau\subset U(t,\tau)\BB_\tau(R(t))\subset B_t
$$
for some $t\geq t_0=t_0(t,R(t))$, where $R(t)$ depends only on $\mathfrak C$. But this is exactly
the absorbtion property \eqref{entering-oscillon}.

\smallskip
\noindent
In the present work, we decided to postulate in the definition of absorbing family
the stronger property of being uniformly bounded, instead of merely pullback bounded.
Such a notion seems to reflect more closely the dissipation mechanism of most equations of
mathematical physics, where the dynamics at time $t$ is confined in bounded sets
$\B_t$ (the pullback absorbing family) whose size in the phase space $\X_t$ remains bounded as $t\to+\infty$
(whereas the boundedness as $t\to-\infty$ is a consequence of \eqref{RRR}).
This happens, for instance, for the so-called \emph{oscillon} equation
arising in recent cosmological theories that motivated the authors of \cite{oscillon}
to develop this novel theory (see also \cite{oscillon2}),
as well as for the wave equation \eqref{MP} studied in this paper.
Conversely, having a pullback bounded absorbing family does not prevent the possibility of
$\B_t$ becoming larger and larger as time increases, in contrast with the common intuition of dissipation.

\subsection*{II} Similarly, the notion of pullback attracting set
in Definition \ref{d-attraction} (or Remark \ref{rem-attr}) can be rephrased in
the following way: a uniformly bounded family $\mathfrak K=\{\K_t\}_{t\in\R}$ is
pullback attracting if and only if
\begin{equation}
\label{attr-oscillon}
\lim_{\tau\to-\infty}\,\di_t(U(t,\tau)C_t,\K_t)=0
\end{equation}
for every pullback-bounded family $\mathfrak C=\{C_t\}_{t\in \R}$ and $t\in\R$.
Observe that this is exactly the pullback attraction property defined in
\cite{oscillon}.

\subsection*{III} An interesting question is whether
property \eqref{attr-oscillon} holds uniformly with respect to intervals of time.
This is not  true in general. In particular, it cannot happen on unbounded intervals.
The next result shows that, if the process is sufficiently smooth,
then the attraction exerted by any invariant pullback attracting family
(such as the time-dependent attractor) is uniform on compact intervals.

\begin{proposition}
\label{unif-att}
Let $\mathfrak K=\{\K_t\}_{t\in\R}$ be an invariant pullback attracting family.
Assume that
\begin{equation}
\label{lip}
\|U(t,\tau)z_1-U(t,\tau)z_2\|_{\X_t}\leq \Q(t-\tau,r)\|z_1-z_2\|_{\X_\tau},
\end{equation}
for all $t\geq\tau\in\R$ and $\|z_i\|_{\H_t}\leq r$, where
$\Q$ is a positive function, increasing in each of its arguments.
Then, for all $R>0$,
$$\lim_{\tau\to-\infty}\,\di_t(U(t,\tau)\BB_\tau(R),\K_t)=0,$$
uniformly for $t$ belonging to a compact set.
\end{proposition}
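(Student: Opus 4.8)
The plan is to reduce the desired uniform attraction to the pointwise attraction at a single time, exploiting both the invariance of $\mathfrak K$ and the smoothing estimate \eqref{lip}. Fix a compact interval $[a,b]$ (any compact set of times is contained in one). For $t\in[a,b]$ and $\tau\leq a$ the process identity gives $U(t,\tau)=U(t,a)U(a,\tau)$, while the invariance of $\mathfrak K$ yields $\K_t=U(t,a)\K_a$. The idea is therefore to transport the attraction already available at the left endpoint $a$ forward to every $t\in[a,b]$ through the uniformly Lipschitz maps $U(t,a)$.

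First I would fix the amplitude $r$ entering \eqref{lip}. Since $\mathfrak K$ is pullback attracting, the enlarged family $\{\U^1_a(\K_a)\}$ is pullback absorbing (Definition \ref{d-attraction}), so there is $\tau_0\leq a$ with $U(a,\tau)\BB_\tau(R)\subset\U^1_a(\K_a)$ for all $\tau\leq\tau_0$. Together with the uniform boundedness of $\mathfrak K$ (say $\K_a\subset\BB_a(r_1)$), this confines both $U(a,\tau)\BB_\tau(R)$ and $\K_a$ inside the fixed ball $\BB_a(r)$ with $r=r_1+1$, for all $\tau\leq\tau_0$.

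The core step is the semidistance estimate. For $z\in U(a,\tau)\BB_\tau(R)$ and $w\in\K_a$, both bounded by $r$, condition \eqref{lip} gives $\|U(t,a)z-U(t,a)w\|_{\X_t}\leq\Q(t-a,r)\|z-w\|_{\X_a}$; taking the infimum over $w$ and then the supremum over $z$, and using $\K_t=U(t,a)\K_a$, I obtain
$$\di_t(U(t,\tau)\BB_\tau(R),\K_t)\leq\Q(t-a,r)\,\di_a\big(U(a,\tau)\BB_\tau(R),\K_a\big)\leq\Q(b-a,r)\,\di_a\big(U(a,\tau)\BB_\tau(R),\K_a\big),$$
the last inequality holding because $\Q$ is increasing in its first argument and $t-a\leq b-a$. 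The right-hand side is independent of $t$ and tends to $0$ as $\tau\to-\infty$ by the pullback attraction of $\mathfrak K$ at the single time $a$, which yields the claimed uniformity on $[a,b]$.

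The main obstacle I anticipate is precisely the uniform choice of $r$: estimate \eqref{lip} only holds on balls, so I must first guarantee that the pulled-back sets $U(a,\tau)\BB_\tau(R)$ do not escape to infinity as $\tau\to-\infty$, which is why the preliminary absorption argument (turning the attracting family into an absorbing one) is needed before \eqref{lip} can be invoked with a single $r$ valid for all large negative $\tau$. The remaining care is merely to apply the Lipschitz bound in the correct direction when passing to the Hausdorff semidistance, and to use the monotonicity of $\Q$ to decouple the bound from $t$ over the compact interval.
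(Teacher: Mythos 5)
Your proposal is correct and follows essentially the same route as the paper's proof: absorb $U(a,\tau)\BB_\tau(R)$ into a fixed neighborhood of $\K_a$ to pin down the radius $r$ in \eqref{lip}, then transport the attraction at the left endpoint $a$ forward via the invariance $\K_t=U(t,a)\K_a$ and the Lipschitz bound, with $\Q(b-a,r)$ as the uniform constant. The only cosmetic difference is that you phrase the key step as a single Hausdorff-semidistance inequality, whereas the paper runs the same estimate pointwise with an $\eps$--$\varrho$ bookkeeping.
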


\begin{proof}
Let $[a,b]$ with $-\infty<a<b<\infty$ be given.
Let $R_0>0$ be such that
$$\mathcal O^1_a(\K_a)\subset\BB_a(R_0).$$
For every $\varrho>0$ small enough, set
$$\eps=\frac{\varrho}{\Q(b-a,R_0)}<1.$$
Since $\mathfrak K$ is pullback attracting,
for any given $R>0$ there exists
$$\tau_0=\tau_0(R,\eps)<a$$
such that
$$U(a,\tau)\BB_\tau(R)\subset {\mathcal O}_{\eps}(\K_a),\quad \forall\tau<\tau_0.$$
Let now $\tau<\tau_0$ be fixed, and select any $x\in \BB_\tau(R)$.
Calling $z=U(a,\tau)x$, choose $k\in \K_a$ for which
$$\|z-k\|_{\X_a}<\eps.$$
Then, in light of \eqref{lip}, for all $t\in [a,b]$ we have
$$
\|U(t,a)z-U(t,a)k\|_{\X_t}\leq \Q(t-a,R_0)\|z-k\|_{\X_a}\leq\eps\Q(b-a,R_0)=\varrho.
$$
Observe that, from the invariance of $\mathfrak K$,
$$U(t,\tau)x=U(t,a)U(a,\tau)x=U(t,a)z\quad\text{ and }\quad U(t,a)k\subset \K_t.$$
Thus,
$$
\dist_{\X_t}(U(t,\tau)x, \K_t)\leq \|U(t,a)z-U(t,a)k\|_{\X_t}\leq\varrho.
$$
In conclusion, we proved that for all $\varrho>0$ small there exists $\tau_0<a$ such that
$$
\di_t(U(t,\tau)\BB_\tau(R), \K_t)\leq\varrho,\quad \forall\tau<\tau_0.
$$
Since $\tau_0$ is independent
of $t\in[a,b]$, the proof is finished.
\end{proof}
%%%%%%%%%%%%%%%%%%%%%%%%%%%%%%%%%%%%%%%%%%%%%%%%%%%%%%%%%%%

%%%%%%%%%%%%%%%%%%%%%%%%%%%%%%%%%%%%%%%%%%%%%%%%%%%%%%%%%%%
\section{Wave Equations with Time-Dependent Speed of Propagation}

\noindent
We now want to apply the
theory above to the nonautonomous wave equation
\eqref{EQ}-\eqref{IC} below.
Let $\Omega\subset \R^3$ be a bounded domain with smooth boundary $\partial\Omega$. For any $\tau\in \R$,
we consider the evolution equation for the unknown variable
$u=u(\x,t):\Omega\times [\tau,\infty)\to \R$
\begin{equation}
\label{EQ}
\eps u_{tt}+ \alpha u_t-\Delta u+f(u)=g,\quad t>\tau,\\
\end{equation}
subject to Dirichlet boundary condition
\begin{equation}
\label{BC}
u_{|\partial\Omega}=0,
\end{equation}
and to the initial conditions
\begin{equation}
\label{IC}
u(\x, \tau) = a(\x)\quad\text{ and }\quad u_t(\x, \tau) = b(\x),
\end{equation}
where $a,b:\Omega\to\R$ are assigned data. Here $\eps=\eps(t)$ is a function of $t$
and we postulate the following assumptions.

\subsection{Conditions on $\boldsymbol{\eps}$}
We let $\eps\in \Co^1(\R)$ be a decreasing bounded function satisfying
\begin{equation}\label{e1}
\lim_{t\to +\infty}\eps(t)=0.
\end{equation}
In particular, there exists $L>0$ such that
\begin{equation}\label{e2}
\sup_{t\in\R}\big[|\eps(t)|+|\eps'(t)|\big]\leq L.
\end{equation}

\subsection{Conditions on $\boldsymbol{f}$}
We let
$f\in \mathcal C^2(\R)$ with
$f(0)=0$ satisfying, for every $s\in\R$, the growth bound
\begin{equation}
\label{phi2}
|f''(s)|\leq c(1+|s|),\quad\text{ for some }c\geq 0,
\end{equation}
along with the dissipation condition
\begin{equation}
\label{phi1}
\liminf_{|s|\to\infty}\frac{f(s)}{s}>-\lambda_1,
\end{equation}
where $\lambda_1>0$ is the first eigenvalue of
the strictly positive Dirichlet operator
$$A=-\Delta\quad\text{with domain}\quad
\D(A)=H^2(\Omega)\cap H_0^1(\Omega)\Subset L^2(\Omega).$$
Finally, the damping coefficient $\alpha$ is a positive constant
and the time-independent external source $g = g(\x)$ is taken in $L^2(\Omega)$.

\medskip
\noindent
Equation \eqref{EQ} can be seen as a nonlinear
damped wave equation with time-dependent speed of
propagation $1/\eps(t)$. Besides, it can also be interpreted as a model for
the thermal evolution in a homogenous isotropic (rigid) heat conductor according to the
Maxwell-Cattaneo law \cite{CAT} (see also \cite[Appendix B]{LSD}), with $\eps(t)$ representing
a time-dependent relaxation parameter.

\medskip
\noindent
In the case when $\eps$ is a
positive constant, the asymptotic behavior of solutions to equation \eqref{EQ}-\eqref{IC}
has been the object of extensive studies since the eighties (see, e.g.\ \cite{ach,bv0,BV,gt}),
and it is well-known to generate
a strongly continuous semigroup $S(t)$ on the phase space
$$\H=H_0^1(\Omega)\times L^2(\Omega).$$
We refer the reader to the recent reference \cite{PZ-wdwe}
for a review on the subject and a discussion on the assumptions
on $f$ which are suitable to prove the existence of the compact global attractor
of optimal regularity.

\medskip
\noindent
The aim of the subsequent sections is to study
the longtime behavior of the solutions to \eqref{EQ}-\eqref{IC} with $\eps$ depending on time,
according to the abstract framework developed in the first part of this article.
Our main result is Theorem \ref{MAIN-WAVE} below, proving the existence of a time-dependent
global attractor for the process associated with \eqref{EQ} acting on a suitable time-dependent
family of spaces. Besides, the attractor turns out to be invariant and of optimal regularity,
in a sense explained below.
%%%%%%%%%%%%%%%%%%%%%%%%%%%%%%%%%%%%%%%%%%%%%%%%%%%%%%

%%%%%%%%%%%%%%%%%%%%%%%%%%%%%%%%%%%%%%%%%%%%%%%%%%%%%%%
\section{Preliminaries}

\subsection{The functional setting}
We set $\HH=L^2(\Omega)$, with inner product
$\l \cdot,\cdot\r$ and norm $\|\cdot\|$.
For $0\leq\sigma\leq 2$,
we define the hierarchy of (compactly) nested Hilbert spaces
$$\HH_\sigma=\D(A^{\sigma/2}),\quad
\l w,v\r_\sigma=\l A^{\sigma/2}w,A^{\sigma/2}v\r,\quad
\|w\|_\sigma=\|A^{\sigma/2}w\|.$$
Then, for $t\in\R$ and $0\leq\sigma\leq 2$, we introduce the  time-dependent spaces
$$\H^\sigma_t=\HH_{\sigma+1}\times \HH_\sigma$$
endowed with the time-dependent product norms
$$\|\{a,b\} \|^2_{\H^\sigma_t}= \|a\|^2_{\sigma+1}+\eps(t)\|b\|^2_\sigma.$$
The symbol $\sigma$ is always omitted whenever zero. In particular,
the time-dependent phase space where we settle the problem
is
$$\H_t=\HH_{1}\times \HH\quad\mbox{ with }\quad \|\{a,b\} \|^2_{\H_t}= \|a\|^2_{1}+\eps(t)\|b\|^2.$$
Then, we have the compact embeddings
$$\H^\sigma_t\Subset \H_t,\quad 0<\sigma\leq 2,
$$
with injection constants independent of $t\in\R$.
Note that the spaces $\H_t$ are all the same as
linear spaces; besides, since $\eps(\cdot)$ is a decreasing function of $t$,
for every $z\in \HH_{1}\times \HH$ and $t\geq \tau\in\R$ there holds $$
\|z\|^2_{\H_t}\leq \|z\|^2_{\H_\tau}\leq \max\Big\{1,\frac{\eps(\tau)}{\eps(t)}\Big\}\|z\|^2_{\H_t}.
$$
Hence the norms $\|\cdot\|^2_{\H_t}$ and $\|\cdot\|^2_{\H_\tau}$ are equivalent for any fixed $t,\tau\in\R$, but
the equivalence constant blows up when $t\to+\infty$.

\medskip
\noindent
Along the paper, we will perform a number of formal energy-type estimates,
which are rigorously justified in a Galerkin approximation scheme.  Moreover, the H\"older, Young and Poincar\'e
inequalities will be tacitly used.

\subsection{Technical lemmas}
We shall exploit the following Gronwall-type lemma, whose proof
 can be found in \cite{visco}.
\begin{lemma}\label{gronwall}
Let $\Lambda : [\tau,\infty)\to\R^+$ be an absolutely continuous function satisfying the inequality
$$\ddt\Lambda(t) + 2\omega\Lambda(t)\leq q(t)\Lambda(t) + k$$
for some $\omega>0, k\geq 0$ and where
$q: [\tau,\infty)\to\R^+$ fulfills
$$\int_\tau^\infty q(y)\,\d y\leq m, $$
with $m\geq 0$.
Then,
$$\Lambda(t)\leq \Lambda(\tau)\e^m\e^{-\omega (t-\tau)}+k\omega^{-1}\e^m.$$
\end{lemma}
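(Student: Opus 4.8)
The plan is to read the differential inequality as a perturbed linear first-order ODE and to integrate it by means of an integrating factor, exploiting that $q$ is nonnegative and integrable. First I would rewrite the hypothesis in the equivalent form
$$\ddt\Lambda(t)+\big(2\omega-q(t)\big)\Lambda(t)\le k,$$
holding for almost every $t\ge\tau$ since $\Lambda$ is absolutely continuous. Then I introduce the integrating factor
$$\mu(t)=\exp\Big(2\omega(t-\tau)-\int_\tau^t q(s)\,\d s\Big),$$
which is strictly positive, satisfies $\mu(\tau)=1$, and is absolutely continuous on every compact subinterval of $[\tau,\infty)$, being the exponential of an absolutely continuous function (here $\int_\tau^{\cdot}q$ is absolutely continuous because $q$ is integrable).

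The central step is to note that $\mu\Lambda$ is absolutely continuous and that, almost everywhere,
$$\ddt\big(\mu(t)\Lambda(t)\big)=\mu(t)\Big[\ddt\Lambda(t)+\big(2\omega-q(t)\big)\Lambda(t)\Big]\le k\,\mu(t),$$
using $\mu>0$ and the rewritten hypothesis. Integrating this almost-everywhere inequality over $[\tau,t]$ via the fundamental theorem of calculus for absolutely continuous functions, and recalling $\mu(\tau)=1$, yields
$$\mu(t)\Lambda(t)\le \Lambda(\tau)+k\int_\tau^t\mu(s)\,\d s,$$
and hence, dividing by $\mu(t)$,
$$\Lambda(t)\le\frac{\Lambda(\tau)}{\mu(t)}+\frac{k}{\mu(t)}\int_\tau^t\mu(s)\,\d s.$$

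It then remains to estimate the two terms using the sign and the integral bound on $q$. Since $0\le\int_\tau^t q\le m$, the homogeneous term satisfies $\mu(t)^{-1}=\exp\big(-2\omega(t-\tau)+\int_\tau^t q\big)\le\e^m\e^{-2\omega(t-\tau)}$. For the inhomogeneous term, I would write it as $\e^{\int_\tau^t q}\int_\tau^t \e^{2\omega(s-t)-\int_\tau^s q}\,\d s$ and bound $\e^{-\int_\tau^s q}\le1$ (as $q\ge0$) together with $\e^{\int_\tau^t q}\le\e^m$, which leaves $\e^m\int_\tau^t \e^{2\omega(s-t)}\,\d s\le \e^m/(2\omega)$. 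Collecting these estimates gives the sharper bound $\Lambda(t)\le\Lambda(\tau)\e^m\e^{-2\omega(t-\tau)}+k(2\omega)^{-1}\e^m$, which for $t\ge\tau$ dominates, and therefore implies, the asserted estimate $\Lambda(\tau)\e^m\e^{-\omega(t-\tau)}+k\omega^{-1}\e^m$.

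I do not anticipate a substantive obstacle: the argument is a textbook Gronwall computation. The only delicate point is justifying the integrating-factor manipulation in the merely absolutely continuous (rather than $C^1$) setting, which is dispatched by the product rule and the fundamental theorem of calculus for absolutely continuous functions; the two exponential estimates that close the argument rely crucially on $q\ge0$ and $\int_\tau^\infty q\le m$.
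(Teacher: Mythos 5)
Your argument is correct, and it is the standard integrating-factor proof of this Gronwall-type lemma; the paper itself gives no proof here but defers to \cite{visco}, where essentially the same computation is carried out. Note that under the hypothesis $\int_\tau^\infty q\,\d y\leq m$ as stated, your calculation actually yields the sharper bound $\Lambda(t)\leq \Lambda(\tau)\e^m\e^{-2\omega(t-\tau)}+k(2\omega)^{-1}\e^m$, which dominates the asserted estimate for $t\geq\tau$ (the weaker form with a single $\omega$ is what survives in \cite{visco} under the more general assumption $\int_s^t q\,\d y\leq \omega(t-s)+m$), so your proof establishes the lemma as stated.
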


\medskip
\noindent
Calling
$$F(s)=\int_0^sf(y)\,\d y, $$
in light of \eqref{phi1} it is a standard matter to verify that
\begin{lemma}\label{l-tec2}
The following inequalities hold for some $0<\nu<1$ and $c_1\geq 0$:
\begin{align}
\label{phi-funz1}
&2\l F(u),1\r\geq -(1-\nu)\|u\|_1^2-c_1,\\
\label{phi-funz2}
&\l f(u),u\r\geq -(1-\nu)\|u\|_1^2-c_1,\quad\forall u\in\HH_1.
\end{align}
\end{lemma}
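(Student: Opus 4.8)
The plan is to reduce both inequalities to elementary pointwise estimates on $f$ and $F$, which are then integrated over $\Omega$ and closed via the Poincar\'e inequality $\lambda_1\|u\|^2\le\|u\|_1^2$, valid on $\HH_1=H^1_0(\Omega)$. The only structural hypothesis I would invoke is the dissipation condition \eqref{phi1}; the growth bound \eqref{phi2} plays no role in this particular lemma.

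First I would extract a useful constant from \eqref{phi1}. Since $\liminf_{|s|\to\infty}f(s)/s>-\lambda_1$, I can fix $\mu$ with $0<\mu<\lambda_1$ together with a threshold $M>0$ such that $f(s)/s\ge-\mu$ for all $|s|>M$; the possibility of choosing $\mu$ strictly positive, even when the liminf is nonnegative, is precisely what will deliver $\nu<1$ rather than merely $\nu\le 1$. Multiplying by $s^2>0$ gives $f(s)s\ge-\mu s^2$ for $|s|>M$, while on the compact range $|s|\le M$ continuity of $f$ bounds $f(s)s$ from below by a constant. Absorbing the discrepancy into an additive term yields a global pointwise bound $f(s)s\ge-\mu s^2-C$ for all $s\in\R$.

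Next I would produce the companion bound for $F$. For $s>M$ I integrate $f(y)\ge-\mu y$ from $M$ to $s$; for $s<-M$ the sign of $y$ reverses the inequality coming from \eqref{phi1} into $f(y)\le-\mu y$, and integrating (tracking orientation) again gives a quadratic lower bound; and for $|s|\le M$, $F$ is bounded by continuity. Collecting the three cases produces $F(s)\ge-\tfrac{\mu}{2}s^2-C'$ for all $s\in\R$. Integrating the two pointwise estimates over $\Omega$ gives $\l f(u),u\r\ge-\mu\|u\|^2-C|\Omega|$ and $2\l F(u),1\r\ge-\mu\|u\|^2-2C'|\Omega|$, and applying Poincar\'e in the form $\|u\|^2\le\lambda_1^{-1}\|u\|_1^2$ (with $\mu>0$) converts each $\|u\|^2$ term into $(\mu/\lambda_1)\|u\|_1^2$.

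Finally I would set $\nu=1-\mu/\lambda_1$, which lies in $(0,1)$ exactly because $0<\mu<\lambda_1$, and take $c_1$ to be the larger of the two additive constants, so that both \eqref{phi-funz1} and \eqref{phi-funz2} hold with the same pair $(\nu,c_1)$. The argument is essentially routine; the only point requiring genuine care is the negative-axis estimate for $F$, where the sign of $s$ flips the inequality extracted from \eqref{phi1} and one must keep track of the orientation of the integral to arrive at the correct lower bound.
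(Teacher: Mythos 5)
Your proof is correct and is precisely the standard argument that the paper invokes without writing out (the paper only remarks that the lemma "is a standard matter to verify" from \eqref{phi1}). All the delicate points are handled properly: the choice of a strictly positive $\mu<\lambda_1$ from the liminf condition, the sign reversal when integrating $f(y)\le-\mu y$ on the negative axis, and the conversion $\nu=1-\mu/\lambda_1\in(0,1)$ via the Poincar\'e inequality.
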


\subsection{A word of warning}
Similarly to the classical damped wave equation with constant coefficients,
proving the dissipativity of the system with $f(u)$ satisfying
\eqref{phi2} and \eqref{phi1} is quite technical and requires
several steps (see e.g. \cite{ach,BV}, see also \cite{BP} for a different strategy).

Since the main focus in this paper is the presence of the time-dependent coefficient $\eps(t)$,
in order to avoid technical complications only due to the nonlinear term $f(u)$,
we require the additional assumption
\begin{equation}
\label{extra}
2\l f(u),u\r\geq 2\l F(u),1\r-(1-\nu)\|u\|_1^2-c_1,
\end{equation}
that will be used for providing a simple and direct proof of Theorem \ref{t-abset}
below.
Condition \eqref{extra} is ensured  by asking, for instance, that
$$
\liminf_{|s|\to\infty} f'(s)>-\lambda_1,
$$
which is slightly less general than \eqref{phi1} but still widely used in the
literature.
%%%%%%%%%%%%%%%%%%%%%%%%%%%%%%%%%%%%%%%%%%%%%%%%%

%%%%%%%%%%%%%%%%%%%%%%%%%%%%%%%%%%%%%%%%%%%%%%%%%
\section{Well-Posedness}

\noindent
For any $\tau\in\R$, we rewrite problem \eqref{EQ}-\eqref{IC} as
\begin{equation}
\label{P}
\begin{cases}
\eps u_{tt}+ \alpha u_t+Au+f(u)=g,\quad t>\tau,\\
u(\tau)=a,\\
u_t(\tau)=b.
\end{cases}
\end{equation}

\begin{theorem}
\label{wellposed}
Problem \eqref{P} generates a strongly continuous process $U(t, \tau) : \H_\tau\to\H_t$,
$t\geq \tau\in\R$,
satisfying the following  continuous dependence property: for
every pair of initial  data  $z_i=\{a_i,b_i\}\in \H_\tau$
such that $\|z_i\|_{\H_\tau}\leq R$, $i=1,2$, the difference
of the corresponding solutions satisfies
\begin{equation}
\label{contdep}
\|U(t,\tau)z_1-U(t,\tau)z_2 \|_{\H_t} \leq \e^{K(t-\tau)}\|z_1-z_2\|_{\H_\tau},\quad \forall t\geq \tau,
\end{equation}
for some constant $K = K(R) \geq 0$.
\end{theorem}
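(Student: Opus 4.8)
The plan is to construct the solution by a Galerkin scheme based on the eigenfunctions $\{e_j\}$ of the Dirichlet operator $A$, derive time-uniform a priori bounds, pass to the limit, and finally read off uniqueness together with the Lipschitz estimate \eqref{contdep} from an energy estimate on the difference of two solutions. Writing $P_n$ for the orthogonal projection onto $\mathrm{span}\{e_1,\dots,e_n\}$, I would seek $u_n(t)\in P_n\HH$ solving the finite-dimensional system obtained by testing \eqref{P} against each $e_j$, with $P_n$-projected data. Since $\eps(t)>0$ for every $t$ and is bounded below by $\eps(T)>0$ on any compact interval $[\tau,T]$, the coefficient of $u_n''$ never degenerates there, so the system is a regular second-order ODE; as $f\in\Co^2$ is locally Lipschitz, Cauchy--Lipschitz yields a unique local approximate solution.

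For the a priori estimates I would test the equation against $u_n'$ to obtain the basic energy identity
$$
\tfrac12\ddt\E + \alpha\|u_t\|^2 - \tfrac{\eps'}{2}\|u_t\|^2 = \l g,u_t\r,\qquad
\E = \|u\|_1^2 + \eps\|u_t\|^2 + 2\l F(u),1\r.
$$
Because $\eps$ is decreasing, $-\eps'/2\geq 0$, so the damping is only reinforced; combined with Lemma~\ref{l-tec2}, which controls $\E$ from below by $\nu\|u\|_1^2+\eps\|u_t\|^2-c_1$, this already rules out finite-time blow-up and gives global existence. To obtain the time-\emph{uniform} bounds the statement ultimately requires, I would run the standard damped-wave multiplier argument, adding a small multiple of $\eps\l u,u_t\r$ to $\E$: using \eqref{P} to replace $\eps u_{tt}$, the sign of $\eps'$ granted by \eqref{e1}--\eqref{e2}, and \eqref{phi-funz2} to extract a negative $-\nu\|u\|_1^2$ term, one reaches a differential inequality $\ddt\Lambda + \omega\Lambda\leq k$ with $\Lambda\simeq\E$ and $\omega>0$. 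Lemma~\ref{gronwall} then bounds $\|u(t)\|_1^2+\eps(t)\|u_t(t)\|^2$ uniformly in $t\geq\tau$ by a constant depending only on the size $R$ of the data.

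With these bounds in hand the limit passage is routine because the growth \eqref{phi2} is subcritical in dimension three: from $|f''(s)|\leq c(1+|s|)$ one gets $|f(s)|\leq c(1+|s|^3)$, so $\HH_1\hookrightarrow L^6$ keeps $f(u_n)$ bounded in $\HH=L^2$, while the equation bounds $u_n''$ in the dual $\HH_{-1}$ of $\HH_1$. An Aubin--Lions argument then gives strong convergence of $u_n$ in $C([\tau,T];\HH_s)$ for $s<1$, hence $f(u_n)\to f(u)$, and one passes to the limit in the weak formulation. A standard energy-equality argument upgrades the weak time-continuity of the limit to strong continuity of $t\mapsto(u(t),u_t(t))$ in $\HH_1\times\HH$, producing a genuine trajectory in the spaces $\H_t$; together with uniqueness (below) this yields $U(\tau,\tau)=I$, the composition law $U(t,s)U(s,\tau)=U(t,\tau)$, and joint continuity.

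The decisive estimate \eqref{contdep} comes from the difference $w=u_1-u_2$ of two solutions with data bounded by $R$, solving $\eps w_{tt}+\alpha w_t+Aw+f(u_1)-f(u_2)=0$. Testing against $w_t$ gives
$$
\tfrac12\ddt\big(\|w\|_1^2+\eps\|w_t\|^2\big)+\alpha\|w_t\|^2-\tfrac{\eps'}{2}\|w_t\|^2 = -\l f(u_1)-f(u_2),w_t\r .
$$
From $|f'(s)|\leq c(1+|s|^2)$ and $\HH_1\hookrightarrow L^6$ one has $\|f(u_1)-f(u_2)\|\leq C(1+\|u_1\|_1^2+\|u_2\|_1^2)\|w\|_1\leq C(R)\|w\|_1$, thanks to the uniform bound just secured. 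The crux is that the cross term is absorbed by the genuine damping $\alpha\|w_t\|^2$ via Young's inequality -- \emph{not} by $\eps\|w_t\|^2$ -- so that the smallness of $\eps(t)$ never enters: $|\l f(u_1)-f(u_2),w_t\r|\leq \tfrac{\alpha}{2}\|w_t\|^2+\tfrac{C(R)^2}{2\alpha}\|w\|_1^2$. Dropping the nonnegative $-\eps'/2$ term one obtains $\ddt(\|w\|_1^2+\eps\|w_t\|^2)\leq \tfrac{C(R)^2}{\alpha}(\|w\|_1^2+\eps\|w_t\|^2)$, and since the bracket equals $\|w(t)\|_{\H_t}^2$ at time $t$ and $\|w(\tau)\|_{\H_\tau}^2$ at time $\tau$, Gronwall delivers \eqref{contdep} with $K=C(R)^2/(2\alpha)$, whence uniqueness. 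I expect the main obstacle to be exactly the procurement of time-\emph{uniform} constants in the face of $\eps(t)\to0$: existence needs only finite-interval bounds, but the form $\e^{K(t-\tau)}$ with $K=K(R)$ independent of $t,\tau$ forces the dissipative multiplier estimate, and one must verify that the degenerating coefficient of $u_{tt}$ spoils neither that bound nor the absorption of the cross term. The saving feature is that the damping $\alpha$ is a fixed positive constant rather than a multiple of $\eps$, which is precisely what keeps both estimates alive as $\eps\to0$.
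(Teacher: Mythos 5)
Your proposal is correct and follows essentially the same route as the paper: a Galerkin construction resting on the dissipative multiplier estimate (the paper's Lemma \ref{l-energy0}), followed by testing the difference equation against $\bar u_t$ so that $\ddt\|\bar z\|_{\H_t}^2$ appears naturally, the sign $\eps'\leq 0$ only reinforces the damping, and the nonlinear cross term is absorbed by the constant damping $\alpha\|\bar u_t\|^2$ before applying Gronwall. Your emphasis on the fact that the absorption must use $\alpha$ rather than the degenerating $\eps\|\bar u_t\|^2$ is exactly the point the paper's computation relies on.
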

Global existence of (weak) solutions $u$
to \eqref{P} is classical, and can be obtained by means of a standard Galerkin scheme,
based on the subsequent Lemma \ref{l-energy0}.
Such solutions satisfy, on any interval $[\tau,t]$ with $t\geq \tau$,
$$u\in \mathcal C([\tau,t],\HH_1),\quad u_t\in \mathcal C([\tau,t],\HH),$$
see e.g.\ \cite{TEM}.
Uniqueness of solutions will then
follow by the continuous dependence estimate \eqref{contdep}.
As a consequence, the family of maps with $t\geq\tau\in\R$
$$U(t,\tau):\H_\tau\to\H_t\quad\mbox{ acting as } \quad U(t,\tau)z=\{(u(t), u_t(t)\},$$
where $u$ is the unique solution to \eqref{P} with initial time $\tau$ and initial condition $z=\{a,b\}\in\H_\tau$,
defines a strongly continuous process on the family $\{\H_t\}_{t\in\R}$.

\medskip
\noindent \emph{Proof of estimate \eqref{contdep}}.
Let $z_1,z_2 \in \H_\tau$ be
such that $\|z_i\|_{\H_\tau}\leq R$, $i=1,2$ and denote by $C$ a
generic positive constant depending on $R$ but independent of $z_i$.
We first observe that the energy estimate in Lemma \ref{l-energy0} below ensures
\begin{equation}
\label{bddata}
\|U(t,\tau)z_i\|_{\H_t}\leq C.
\end{equation}
We call $\{u_i(t),\partial_t u_i(t)\}=U(t,\tau)z_i$ and denote
$\bar z(t)=\{\bar u(t),\bar u_t(t)\}=U(t,\tau)z_1-U(t,\tau)z_2$.
Then, the difference between the two solutions satisfies
\begin{equation*}
\eps \bar u_{tt}+ \alpha \bar u_t-\Delta \bar u+f(u_1)-f(u_2)=0,
\end{equation*}
with initial datum $z(\tau)=z_1-z_2$.
Multiplying by $2\bar u_t$ we obtain
\begin{equation*}
\ddt \|\bar z\|_{\H_t}^2+ [2\alpha-\eps']\|\bar u_t\|^2=-2\l f(u_1)-f(u_2),\bar u_t\r.
\end{equation*}
Estimating the right-hand side in light of \eqref{phi2} and \eqref{bddata}
\begin{align*}
-2\l f(u_1)-f(u_2),\bar u_t\r\leq C\|\bar u\|_1\|\bar u_t\|\leq \frac{\alpha}{2}\|\bar u_t\|^2
+C\|\bar u\|_1^2,
\end{align*}
we end up with the differential inequality
\begin{equation*}
\ddt \|\bar z(t)\|_{\H_t}^2\leq C\|\bar z(t)\|_{\H_t}^2,
\end{equation*}
and an application of the Gronwall lemma on $[\tau,t]$ completes the proof.\hfill$\Box$
%%%%%%%%%%%%%%%%%%%%%%%%%%%%%%%%%%%%%%%%%%%%%%%%%

%%%%%%%%%%%%%%%%%%%%%%%%%%%%%%%%%%%%%%%%%%%%%%%%%
\section{Absorbing Sets}

\noindent
This section is devoted to studying the dissipation properties of the process
$U(t,\tau)$ associated with \eqref{P}. We start with a new notion of absorbtion, which is {\it stronger}
than the pullback dissipativity of Definition \ref{d-absorbing}.
\begin{definition} A {\it time-dependent absorbing set} for the process $U(t,\tau)$ is
a uniformly bounded family $\mathfrak B=\{\B_t\}_{t\in\R}$
with the following property: for every $R\geq 0$ there exists $\theta_\e=\theta_\e(R)\geq0$ such that
$$\tau\leq t-\theta_\e
\quad\Rightarrow\quad
U(t,\tau)\BB_\tau(R)\subset\B_t.$$
\end{definition}

The existence of a time-dependent absorbing set (hence pullback absorbing) for $U(t,\tau)$ is witnessed by

\begin{theorem}
\label{t-abset}
There exists $R_0>0$ such that the family ${\mathfrak B}=\{\BB_t(R_0)\}_{t\in\R}$
is a time-dependent absorbing set for $U(t,\tau)$.
Besides,
\begin{equation}
\label{diss-int}
\sup_{z\in \BB_\tau(R_0)}\Big[\|U(t,\tau)z\|_{\H_t}
+\int_\tau^\infty\|u_t(y)\|^2\,\d y\Big]\leq I_0,\quad \forall \tau\in\R,
\end{equation}
for some $I_0\geq R_0$.
\end{theorem}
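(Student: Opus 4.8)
The plan is to run two distinct energy estimates on the (Galerkin-regularized) solution $z(t)=\{u(t),u_t(t)\}$, always measuring everything through the time-dependent quantity $\|z(t)\|_{\H_t}^2=\|u\|_1^2+\eps(t)\|u_t\|^2$. First I would multiply \eqref{P} by $2u_t$ in $\HH$ to get the basic identity
\[
\ddt\big[\|u\|_1^2+\eps\|u_t\|^2+2\l F(u),1\r\big]+(2\alpha-\eps')\|u_t\|^2=2\l g,u_t\r,
\]
with $F(s)=\int_0^sf$; here $\eps'\le0$ guarantees $2\alpha-\eps'\ge2\alpha>0$. Since this provides no control on $\|u\|_1$, I would add the usual multiplier correction $\Phi=2\delta\eps\l u_t,u\r$ and set $\Lambda=\|u\|_1^2+\eps\|u_t\|^2+2\l F(u),1\r+\Phi$. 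Using $\eps\le L$ and the Poincar\'e inequality, for $\delta$ small $\Phi$ is a lower-order perturbation, so that (with \eqref{phi-funz1} controlling $F$ from below) $\Lambda+c_1\ge\tfrac{\nu}{2}\|z\|_{\H_t}^2$, while the growth bound $|f''(s)|\le c(1+|s|)$ together with $H^1\hookrightarrow L^4$ (in dimension three, with $t$-independent constant) yields the matching quartic upper bound $\Lambda\le C(1+\|z\|_{\H_t}^2+\|z\|_{\H_t}^4)$.

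Next I would compute $\ddt\Lambda$: substituting $\eps u_{tt}=-\alpha u_t-Au-f(u)+g$ into $\ddt\Phi$ produces, among other terms, the crucial $-2\delta\l f(u),u\r$, and here the extra assumption \eqref{extra} enters, turning it into the dissipative contribution $-2\delta\l F(u),1\r+\delta(1-\nu)\|u\|_1^2+\delta c_1$. After absorbing the cross terms $2\delta(\eps'-\alpha)\l u_t,u\r$ (via Poincar\'e and Young), absorbing the forcing $2\l g,u_t\r+2\delta\l g,u\r$ (via Young, at the cost of an additive constant proportional to $\|g\|^2$), and using $\eps\le L$ to convert $\|u_t\|^2$ into $\eps\|u_t\|^2$, I expect an inequality
\[
\ddt\Lambda+\omega\Lambda\le C
\]
for suitable $\omega>0$, $C\ge0$, where the $-2\delta\l F,1\r$ term is exactly what dominates the $F$-part of $\omega\Lambda$. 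Integrating gives $\Lambda(t)\le\Lambda(\tau)\e^{-\omega(t-\tau)}+C/\omega$; combining with the two-sided bounds, for $z\in\BB_\tau(R)$ one obtains $\|U(t,\tau)z\|_{\H_t}^2\le\frac{2}{\nu}\big[C(1+R^2+R^4)\e^{-\omega(t-\tau)}+C/\omega+c_1\big]$. Choosing $R_0$ larger than the limiting radius $\sqrt{\tfrac{2}{\nu}(C/\omega+c_1)}$ yields the time-dependent absorbing property with a suitable $\theta_\e(R)$, while taking $z\in\BB_\tau(R_0)$ and $\e^{-\omega(t-\tau)}\le1$ gives the uniform bound $\|U(t,\tau)z\|_{\H_t}\le I_0$ for all $t\ge\tau$, for some $I_0\ge R_0$.

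For the integral term in \eqref{diss-int} the modified energy is useless, since integrating its dissipation inequality leaves the additive constant $C$ and produces a bound growing like $C(t-\tau)$. Instead I would exploit the gradient structure (the source $g$ being time-independent) via the Lyapunov functional
\[
E=\tfrac12\|u\|_1^2+\tfrac12\eps\|u_t\|^2+\l F(u),1\r-\l g,u\r.
\]
A direct computation, again using $\eps u_{tt}=-\alpha u_t-Au-f(u)+g$, gives the clean identity $\ddt E=\big(\tfrac12\eps'-\alpha\big)\|u_t\|^2\le-\alpha\|u_t\|^2$ because $\eps'\le0$; crucially no constant appears, since $g$ is incorporated into $E$ rather than estimated. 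Moreover \eqref{phi-funz1} and Young's inequality bound $E$ from below by a constant, so integrating over $[\tau,\infty)$ yields $\alpha\int_\tau^\infty\|u_t\|^2\le E(\tau)+C'$, and the quartic upper bound on $E(\tau)$ controls this by a constant for $z\in\BB_\tau(R_0)$; adding to the previous bound gives the final $I_0$. The main obstacle throughout is the cubic nonlinearity: its quartic potential $F$ prevents $\Lambda$ and $E$ from being equivalent to $\|z\|_{\H_t}^2$ (only a quartic upper bound holds, which forces the $1+R^2+R^4$ initial control), and it is precisely hypothesis \eqref{extra} that supplies the $F$-dissipation needed to close the differential inequality for $\Lambda$ in the simple form above.
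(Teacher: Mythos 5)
Your proposal is correct and follows essentially the same route as the paper: a perturbed energy functional of the form $\|z\|_{\H_t}^2+2\l F(u),1\r+2\delta\eps\l u_t,u\r+\dots$, obtained by multiplying by $2u_t+2\delta u$, with \eqref{extra} used to reconstruct the functional and close the Gronwall inequality, and the dissipation integral obtained from the unperturbed ($\delta=0$) energy identity. The only differences are cosmetic (you absorb $-2\l g,u\r$ and the $\delta\alpha\|u\|^2$ correction via Young's inequality instead of building them into the functional).
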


As already discussed, we propose an easy and direct proof of this result,
based on the extra assumption \eqref{extra}. The crucial ingredient is the following dissipation estimate.

\begin{lemma}
\label{l-energy0}
Let $t\geq \tau$. For $z\in \H_\tau$, let $U(t,\tau)z$ be the solution of
\eqref{P} with initial time $\tau$ and datum $z=\{a,b\}$.
Then, if \eqref{extra} holds, there exist $\omega=\omega(\alpha,\|\eps\|_{L^\infty},\|\eps'\|_{L^\infty})>0$, $K_1\geq0$
and an increasing positive function $\Q$ such that
$$\|U(t,\tau)z\|_{\H_t}\leq \Q(\|z\|_{\H_\tau})\e^{-\omega(t-\tau)}+K_1,\quad \forall \tau\leq t.$$
\end{lemma}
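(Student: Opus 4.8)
The plan is to construct a Lyapunov functional adapted to the time-dependent norm of $\H_t$ and to reduce the estimate to the Gronwall-type Lemma~\ref{gronwall}. All the manipulations below are formal and are meant to be carried out on the Galerkin approximations and then passed to the limit. First I multiply \eqref{P} by $2u_t$ and integrate over $\Omega$; since $g$ is time-independent and $\ddt F(u)=f(u)u_t$, setting
$$\E(t)=\|u\|_1^2+\eps(t)\|u_t\|^2+2\l F(u),1\r-2\l g,u\r$$
I obtain the identity $\ddt\E(t)+(2\alpha-\eps'(t))\|u_t\|^2=0$. Because $\eps$ is decreasing, $-\eps'\geq0$, so the dissipation coefficient satisfies $2\alpha-\eps'\geq2\alpha$; this gives only boundedness, so next I multiply \eqref{P} by $2\delta u$ (with $\delta>0$ small, to be fixed) and record the auxiliary functional $\Psi(t)=\eps(t)\l u_t,u\r$. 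Substituting $\eps u_{tt}=-\alpha u_t-Au-f(u)+g$ yields
$$\ddt\Psi=(\eps'-\alpha)\l u_t,u\r-\|u\|_1^2-\l f(u),u\r+\l g,u\r+\eps\|u_t\|^2.$$

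I then study the modified energy $\Lambda=\E+2\delta\Psi$. The decisive use of the extra assumption \eqref{extra} enters precisely here: it lets me replace $-\l f(u),u\r$ by $-\l F(u),1\r+\tfrac{1-\nu}{2}\|u\|_1^2+\tfrac{c_1}{2}$, thereby producing a genuine $-2\delta\l F(u),1\r$ contribution that matches the potential energy already sitting inside $\E$ (and hence inside $\Lambda$), so that the nonlinear energy will decay at the same rate as the elastic and kinetic parts. Collecting terms, the $\|u_t\|^2$ coefficient becomes $-(2\alpha-\eps')+2\delta\eps\leq-2\alpha+2\delta L$, the $\|u\|_1^2$ coefficient is of order $-\delta$, and \eqref{phi-funz1} controls the sign of $\l F(u),1\r$; the cross term $2\delta(\eps'-\alpha)\l u_t,u\r$ and the forcing term $2\delta\l g,u\r$ are absorbed by Young and Poincar\'e, splitting the cross term so that its $\|u\|_1^2$ part stays below the available $O(\delta)$ gradient budget while its $\|u_t\|^2$ part is soaked up by the large $-2\alpha$ budget. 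Choosing $\delta$ small (depending only on $\alpha$ and $L$) and then $\omega\leq\delta$ small, I arrive at $\ddt\Lambda+\omega\Lambda\leq C$. The key structural point is that the kinetic term is matched against $\eps\|u_t\|^2$, not $\|u_t\|^2$: since $\eps\leq L$, the fixed dissipation $\alpha\|u_t\|^2$ dominates $\omega\eps\|u_t\|^2$ uniformly, so $\omega$ depends only on $\alpha,\|\eps\|_{L^\infty},\|\eps'\|_{L^\infty}$ and does not degenerate as $\eps(t)\to0$.

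It remains to return to the phase-space norm. Using \eqref{phi-funz1} with Young and Poincar\'e gives the lower bound $\E\geq\tfrac{\nu}{2}\|U(t,\tau)z\|_{\H_t}^2-C$, while the growth bound \eqref{phi2} forces $|F(s)|\leq C(1+|s|^4)$ and hence, through the Sobolev embedding $\HH_1\hookrightarrow L^4(\Omega)$, $|\l F(u),1\r|\leq C(1+\|u\|_1^4)$, giving an upper bound $\E(\tau)\leq\Q_0(\|z\|_{\H_\tau})$ for some increasing $\Q_0$. For $\delta$ small one has $|2\delta\Psi|\leq\tfrac12\E+C$, so $\Lambda$ and $\E$ are comparable up to additive constants; thus $\Lambda(\tau)\leq\Q_1(\|z\|_{\H_\tau})$ and $\|U(t,\tau)z\|_{\H_t}^2\leq C(\Lambda(t)+C)$. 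Applying Lemma~\ref{gronwall} with $q\equiv0$ (after relabelling $\omega$) to $\ddt\Lambda+\omega\Lambda\leq C$ and combining the two comparisons yields $\|U(t,\tau)z\|_{\H_t}^2\leq\Q_1(\|z\|_{\H_\tau})\e^{-\omega(t-\tau)}+K_1$, and taking square roots via $\sqrt{a+b}\leq\sqrt a+\sqrt b$ gives the claim after renaming the increasing function. The main obstacle is the bookkeeping in the second step: one must exploit \eqref{extra} to force the potential energy to dissipate at the common rate, and simultaneously check that absorbing every cross term never exceeds the $O(\delta)$ gradient budget, so that a single $\omega$, independent of $t$ and of the vanishing of $\eps$, governs the decay.
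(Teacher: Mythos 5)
Your proof is correct and follows essentially the same route as the paper: the perturbed energy $\Lambda=\E+2\delta\eps\l u_t,u\r$ is, up to the harmless omission of the $\delta\alpha\|u\|^2$ term (whose role you recover by absorbing the cross term $-2\delta\alpha\l u_t,u\r$ via Young and Poincar\'e), exactly the functional $\E$ used in the paper, and you invoke \eqref{extra}, \eqref{phi-funz1} and the Gronwall lemma in the same way. The key observations --- that \eqref{extra} lets the potential energy be reconstructed inside the functional, and that $\omega\eps\|u_t\|^2\leq\omega L\|u_t\|^2$ is dominated by the genuine dissipation so that $\omega$ does not degenerate as $\eps(t)\to0$ --- match the paper's argument.
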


\begin{proof}
Let $C\geq0$ be a {\it generic}
constant independent
of the initial datum $z$ and denote
$$E(t)=\|U(t,\tau)z\|_{\H_t}^2$$
(double) the {\it energy} associated with problem \eqref{P}.
Due to~\eqref{e2}, \eqref{phi2}
and \eqref{phi-funz1}, the functional
\begin{align*}
\E = E+\delta\alpha \|u\|^2+2\delta\eps\l u_t,u\r+2\l F(u),1\r-2\l g,u\r
\end{align*}
fulfills, for $\delta > 0$ small and some $0<\nu<1$ provided by Lemma \ref{l-tec2},
\begin{equation}
\label{equivEnergia}
\nu E(t) - C \leq \E(t) \leq CE^2(t)+C.
\end{equation}
Indeed, in light of \eqref{e2}, if $\delta$  is small enough we have
$$2\delta\eps|\l u_t,u \r|\leq
\frac{\eps}{2}\|u_t\|^2+CL\delta^2\|u\|^2\leq \frac{\eps}{2}\|u_t\|^2+\frac{\delta\alpha}{2}\|u\|^2.$$
Multiplying ~\eqref{EQ} by $2u_t+2\delta u$, we infer
\begin{align*}
%\label{peri}
\ddt \E +&[2\alpha-\eps'-2\delta\eps]\|u_t\|^2+2\delta
\|u\|_1^2 +2\delta\l f(u),u\r-2\delta\l g,u\r=
2\delta\eps'\l u_t,u \r,
\end{align*}
and estimating
$$2\delta|\eps'\l u_t,u \r|\leq 2\delta L\|u_t\|\|u\|\leq \frac{\alpha}{2}\|u_t\|^2+\frac{\delta\nu}{2}\|u\|_1^2$$
for $\delta$ small, we arrive at
\begin{equation}
\label{e0}
\ddt \E +\Big[\frac{3}{2}\alpha-\eps'-2\delta\eps\Big]\|u_t\|^2 +\delta\Big[2-\frac{\nu}{2}\Big]\|u\|_1^2
+2\delta\l f(u),u\r-2\delta\l g,u\r\leq 0.
\end{equation}
In light of ~\eqref{extra} we can reconstruct the functional $\E$, which provides
\begin{equation*}
\ddt \E +\delta\E +\alpha\|u_t\|^2+\Gamma
\leq \delta c_1,
\end{equation*}
where
$$\Gamma=\Big[\frac{\alpha}{2}-\eps'-3\delta\eps\Big]\|u_t\|^2+\frac{\delta\nu}{2}
\|u\|_1^2-\delta^2\alpha\|u\|^2-2\delta^2\eps\l u_t,u\r.$$
Therefore, setting $\delta$ small enough so that $\Gamma\geq 0$,
we end up with
\begin{equation}
\label{diff-in1}
\ddt \E + \delta \E + \alpha\|u_t\|^2 \leq \delta c_1.
\end{equation}
Applying the Gronwall lemma, together with~\eqref{equivEnergia}, we have proved Lemma \ref{l-energy0}.
\end{proof}

\medskip
\noindent
{\it Proof of Theorem \ref{t-abset}.}
Let $R_0=1+2K_1$. An application of Lemma \ref{l-energy0} for $z\in \BB_\tau(R)$ yields
$$\|U(t,\tau)z\|_{\H_t}\leq \Q(R)\e^{-\omega(t-\tau)}+K_1\leq 1+2K_1=R_0,$$
provided that $t-\tau\geq \theta_\e$, where
$$\theta_\e=\max\Big\{0, \omega^{-1}\log\frac{\Q(R)}{1+K_1}\Big\}.$$
This concludes the proof of the existence of the time-dependent absorbing set.
In order to prove the integral estimate for $\|u_t\|$, it is enough to integrate
\eqref{diff-in1} with $\delta=0$ on $[\tau,\infty)$.

\begin{remark}
We can assume that the time-dependent absorbing set $\B_t=\BB_t(R_0)$  is positively invariant (namely
$U(t,\tau)\B_\tau  \subset \B_t$ for all $t\geq \tau$). Indeed, calling $\theta_\e$ the entering time of $\B_t$
such that
$$U(t,\tau)\B_\tau\subset \B_t,\quad \forall\tau\leq t-\theta_\e,$$
we can substitute $\B_t$ with the invariant absorbing family
$$\bigcup_{\tau\leq t-\theta_\e}U(t,\tau)\B_\tau\subset \B_t.$$
\end{remark}
%%%%%%%%%%%%%%%%%%%%%%%%%%%%%%%%%%%%%%%%%%%%%%%%%

%%%%%%%%%%%%%%%%%%%%%%%%%%%%%%%%%%%%%%%%%%%%%%%%%
\section{Existence of the Time-Dependent Global Attractor}

\noindent
The main result concerning the asymptotic behavior of problem \eqref{P} is contained in the
following theorem.

\begin{theorem}
\label{MAIN-WAVE}
The process $U(t,\tau):\H_\tau\to\H_t$ generated by problem \eqref{P} admits
an invariant time-dependent global attractor $\AA=\{\A_t\}_{t\in\R}$. Besides,
$\A_t$ is bounded in $\H_t^1$, with a bound independent of $t$.
\end{theorem}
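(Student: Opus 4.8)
The plan is to secure the three assertions—existence, invariance and regularity—separately, the abstract results of the first part reducing most of the work to a single analytic estimate. Existence will follow from Theorem~\ref{MAA} as soon as the process is shown to be asymptotically compact, that is $\KK\neq\emptyset$; and once $\AA$ exists, its invariance is automatic, since $U(t,\tau)$ is strongly continuous by Theorem~\ref{wellposed}, hence closed, hence $T$-closed for every $T>0$, so that Theorem~\ref{invariance-2} applies verbatim. All the effort thus goes into exhibiting a pullback attracting family of compact sets, and in arranging the compactness to come from the regular space $\H^1_t$, which at one stroke yields the optimal regularity of $\AA$.

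I would obtain such a family through the usual splitting of the trajectory into a decaying and a smoothing component. For $z\in\BB_\tau(R_0)$ write $U(t,\tau)z=\{v,v_t\}+\{w,w_t\}$, where $\{v,v_t\}$ obeys the homogeneous linear dynamics $\eps v_{tt}+\alpha v_t+Av=0$ with datum $z$ at time $\tau$, and $\{w,w_t\}$ collects the source, the nonlinearity and the null datum. Manipulations analogous to those in Lemma~\ref{l-energy0} with $f=g=0$ give the genuine exponential decay
$$\|\{v,v_t\}(t)\|_{\H_t}\leq C\|z\|_{\H_\tau}\,\e^{-\omega(t-\tau)},$$
uniformly for $z\in\BB_\tau(R_0)$; and since $U(t,\tau)z$ is uniformly bounded in $\H_t$ by Theorem~\ref{t-abset}, so is $\{w,w_t\}$. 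To make the remainder amenable to a higher-order estimate one further splits $f$ into a globally Lipschitz part and a part compatible with the $\H^1_t$-energy, so that the nonlinear contribution entering the $w$-equation can be absorbed below.

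The heart of the matter is a bound for $\{w,w_t\}$ in $\H^1_t$ uniform in $z$, in $\tau$ and in $t$. I would test the $w$-equation with $2Aw_t+2\delta Aw$, $\delta>0$ small, which—after the same bookkeeping as in Lemma~\ref{l-energy0}, the terms carrying $\eps'$ being controlled through \eqref{e2}—produces a functional equivalent to $\|w\|_2^2+\eps\|w_t\|_1^2=\|\{w,w_t\}\|^2_{\H^1_t}$. The source contributes $\l g,2Aw_t\r=2\ddt\l g,Aw\r$, an \emph{exact} derivative precisely because $g$ is time-independent; it is therefore absorbed into the functional at the price of the bounded perturbation $-2\l g,Aw\r$, dominated by $\tfrac14\|w\|_2^2+C$. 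This is exactly the mechanism that makes $\H^1_t$, and nothing more regular, the natural target for a source $g\in\HH$. Starting from the null datum one is then led to a differential inequality of the type handled by the Gronwall Lemma~\ref{gronwall}, and the bound closes uniformly in $\tau$. Granting a bound $\|\{w,w_t\}(t)\|_{\H^1_t}\leq M$, the family $\K_t=\{\zeta\in\H^1_t:\|\zeta\|_{\H^1_t}\leq M\}$ is compact in $\H_t$ by the embedding $\H^1_t\Subset\H_t$ with constant independent of $t$; since $U(t,\tau)z$ differs from a point of $\K_t$ only by the decaying part, one gets $\di_t(U(t,\tau)\BB_\tau(R_0),\K_t)\to0$ as $\tau\to-\infty$, so $\mathfrak K=\{\K_t\}_{t\in\R}\in\KK$. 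Theorem~\ref{MAA} then yields $\AA=\AA^\star$, and its minimality $\A_t\subset\K_t$ furnishes the asserted bound of $\A_t$ in $\H^1_t$, uniform in $t$.

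The step I expect to be delicate is closing the higher-order estimate against the critical growth of $f$ permitted by \eqref{phi2}. The borderline term $\l\nabla f(u),\nabla w_t\r$ cannot be controlled by the $\H^1_t$-energy alone in dimension three; after extracting an exact derivative one is left with a remainder carrying a factor of $u_t$, and the only way I see to tame it is to cast that remainder in the form $q(t)\|\{w,w_t\}\|^2_{\H^1_t}$ with $\int_\tau^\infty q<\infty$, the finiteness of the integral being supplied by the dissipation estimate \eqref{diss-int}. This is precisely the shape of hypothesis required by Lemma~\ref{gronwall}, so that the $t$-independent bound follows. A secondary nuisance is the degeneracy $\eps(t)\to0$, which weakens the kinetic part $\eps\|w_t\|_1^2$ of the $\H^1_t$-norm as $t\to+\infty$: one must keep every constant uniform in $t$, relying throughout on the uniform bound \eqref{e2}.
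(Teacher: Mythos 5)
Your overall architecture (splitting into a decaying part plus a smoothing remainder, compactness from a higher-order bound, existence via Theorem~\ref{MAA}, invariance via Theorem~\ref{invariance-2} through strong continuity) is the right one, but there is a genuine gap in the central estimate: the one-pass bound of the remainder in $\H^1_t$ does not close under the critical growth \eqref{phi2} when the initial datum is only known to lie in the absorbing set. With $u$ bounded merely in $\HH_1\subset L^6$ one has $f'(u)\in L^3$, so $\nabla f(u)=f'(u)\nabla u\in L^{6/5}$ only: the quantity $\|f(u)\|_1$ that your $\H^1_t$-functional must absorb is not finite a priori. The same obstruction defeats your fallback: after extracting the exact derivative from $\l f(u),Aw_t\r$, the commutator $\l f'(u)u_t,Aw\r$ pairs $f'(u)u_t\in L^{6/5}$ against $Aw\in L^2$, and it cannot be recast as $q(t)\Lambda$ with $q$ integrable, because the factor $u_t$ controlled by the dissipation integral \eqref{diss-int} is not accompanied by any factor of $w$; Lemma~\ref{gronwall} has nothing to bite on.

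The paper resolves this with a two-stage bootstrap that your proposal is missing. First stage: decompose using the \emph{nonlinear} contracting flow $\eps v_{tt}+\alpha v_t+Av+f_0(v)=0$ (not the linear one), so that the remainder equation carries $f(u)-f_0(v)$; differentiating along the flow produces $[f_0'(u)-f_0'(v)]u_t$, which contains a factor of $w=u-v$ and can be estimated by $C\|u_t\|\,\|w\|_{4/3}^2$ --- exactly the integrable-$q$ form required by Lemma~\ref{gronwall}. This closes an estimate in $\H^{1/3}_t$ (not $\H^1_t$), which already yields compactness, hence the attractor, its invariance, and the bound $\A_t\subset K_t$ in $\H^{1/3}_t$. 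Second stage: only for $z\in\A_\tau$, using invariance and the $\H^{1/3}_t$-bound, the trajectory satisfies $u$ bounded in $\HH_{4/3}\subset L^{18}$, whence $\|f(u)\|_1\leq C$; only then do your linear decomposition and your $\H^1_t$-energy estimate (including the exact-derivative treatment of $\l g,Aw_t\r$) go through. Your plan is salvageable, but you must insert the intermediate gain of $1/3$ of a derivative before attempting the $\H^1_t$ bound.
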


The existence of the attractor, according to Definition \ref{d-attractor},
will be proved by a direct application of the abstract
Theorem \ref{MAA}. Precisely, in order to show that
the process is asymptotically compact,
we shall exhibit a
pullback attracting family of (nonvoid) compact sets.
To this aim, the strategy classically consists in finding a suitable decomposition of the
process in the sum of a {\it decaying} part and of a {\it compact} one.

\subsection{The Decomposition}
We write $f=f_0+f_1$,
where $f_0,f_1\in C^2(\R)$ fulfill, for some $k\geq 0$,
\begin{align}
\label{phiuno}
& |f'_1(s)|\leq k,\quad\forall s\in\R,\\
\label{crez}
& |f''_0(s)|\leq k(1+|s|),\quad
\forall s\in\R,\\
\label{atzero}
&f_0(0)=f_0'(0)=0,\\
\label{mono}
& f_0(s)s\geq 0,\quad\forall s\in\R.
\end{align}
This is possible owing to assumptions \eqref{phi2} and \eqref{phi1} (cf.\ \cite{ach,GP-Grand}).

\medskip
\noindent Let ${\mathfrak B}=\{\BB_t(R_0)\}_{t\in\R}$ be a time-dependent absorbing set according to
Theorem \ref{t-abset}
and let $\tau\in\R$  be fixed. Then, for any $z\in\BB_\tau(R_0)$, we split $U(t, \tau)z$ into the sum
$$U(t,\tau)z=\{u(t),u_t(t)\}=U_0(t,\tau)z+U_1(t,\tau)z,$$
where
$$U_0(t,\tau)z=\{v(t), v_t(t)\}\and
U_1(t,\tau)z=\{w(t), w_t(t)\}$$
solve the systems
\begin{equation}
\label{PL}
\begin{cases}
\eps v_{tt}+ \alpha v_t+Av+f_0(v)=0,\\
U_0(\tau,\tau)=z,
\end{cases}
\end{equation}
and
\begin{equation}
\label{PN}
\begin{cases}
\eps w_{tt}+ \alpha w_t+Aw+f(u)-f_0(v)=g,\\
U_1(\tau,\tau)=0.
\end{cases}
\end{equation}
In what follows, the {\it generic} constant $C\geq 0$ depends
only on $\mathfrak B$.

\begin{lemma}\label{DECAY}
There exists $\delta=\delta({\mathfrak B})>0$ such that
$$
\|U_0(t,\tau)z\|_{\H_t}\leq C\e^{-\delta(t-\tau)},\quad\forall t\geq\tau.
$$
\end{lemma}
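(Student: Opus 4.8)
The plan is to derive an energy estimate for the linear-type system \eqref{PL} that decays exponentially, uniformly with respect to the time-dependent norm $\|\cdot\|_{\H_t}$. The key point is that $U_0(t,\tau)z$ solves a damped wave equation with the \emph{monotone} nonlinearity $f_0$ satisfying \eqref{mono} (so $\l f_0(v),v\r\geq 0$) and \eqref{atzero}, and crucially \emph{no} forcing term on the right-hand side. First I would set $E_0(t)=\|U_0(t,\tau)z\|_{\H_t}^2=\|v\|_1^2+\eps(t)\|v_t\|^2$ and introduce the perturbed functional
$$\Phi=E_0+2\delta\eps\l v_t,v\r+\delta\alpha\|v\|^2,$$
mimicking the construction in the proof of Lemma \ref{l-energy0}. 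Using \eqref{e2}, for $\delta$ small enough one obtains the two-sided bound $\tfrac12 E_0(t)\leq\Phi(t)\leq C E_0(t)$ (note that here, unlike in Lemma \ref{l-energy0}, there is no quadratic $F(u)$ term so the upper bound is linear in $E_0$).

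Next I would differentiate $\Phi$ along the trajectory. Multiplying \eqref{PL} by $2v_t+2\delta v$ and integrating over $\Omega$ produces
$$\ddt\Phi+\big[2\alpha-\eps'-2\delta\eps\big]\|v_t\|^2+2\delta\|v\|_1^2+2\delta\l f_0(v),v\r=2\delta\eps'\l v_t,v\r,$$
where I have used $\l f_0(v),v_t\r=\ddt\l F_0(v),1\r$ but dropped that term by absorbing it into a redefinition of $\Phi$ if needed; in fact, because $\l f_0(v),v\r\geq0$ by \eqref{mono}, this whole term is favourable and can simply be discarded. Estimating $2\delta|\eps'\l v_t,v\r|\leq\tfrac{\alpha}{2}\|v_t\|^2+C\delta^2\|v\|_1^2$ exactly as before, and choosing $\delta$ small so that the coefficients of $\|v_t\|^2$ and $\|v\|_1^2$ stay strictly positive, I arrive at a differential inequality of the form
$$\ddt\Phi+2\delta\Phi\leq 0.$$
Integrating this gives $\Phi(t)\leq\Phi(\tau)\e^{-2\delta(t-\tau)}$, and combining with the two-sided bound between $\Phi$ and $E_0$ together with the fact that $z\in\BB_\tau(R_0)$ controls $E_0(\tau)=\|z\|_{\H_\tau}^2\leq R_0^2$ yields $E_0(t)\leq C\e^{-2\delta(t-\tau)}$, i.e. the claimed decay after renaming $\delta$.

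The main obstacle I anticipate is the time dependence hidden in the norm $\|\cdot\|_{\H_t}$: the coefficient $\eps(t)$ multiplying $\|v_t\|^2$ is not constant, so differentiating $E_0$ produces the extra term $\eps'\|v_t\|^2$. Here the sign structure saves the argument, since $\eps'\leq 0$ by monotonicity of $\eps$, and in any case $|\eps'|\leq L$ by \eqref{e2}, so this term is dominated by the genuine damping $2\alpha\|v_t\|^2$ once $\delta$ is chosen small relative to $\alpha$ and $L$. The whole point is that the decay rate $\delta$ depends only on $\alpha$, $\|\eps\|_{L^\infty}$ and $\|\eps'\|_{L^\infty}$ (hence only on $\mathfrak B$ through $R_0$), and not on $\tau$, which is exactly what the uniform-in-$\tau$ statement of the lemma requires. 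I would flag that the absence of a forcing constant $k$ on the right-hand side (contrast with \eqref{diff-in1}) is what upgrades the mere boundedness of Lemma \ref{l-energy0} into genuine exponential decay to zero for this piece of the decomposition.
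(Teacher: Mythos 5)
Your overall strategy (perturbed energy functional, multiplier $2v_t+2\delta v$, absence of forcing) is the right one and matches the paper's, but there is a genuine gap in how you treat the nonlinearity, and it is precisely the point where the lemma is delicate. When you multiply \eqref{PL} by $2v_t$, the nonlinear term produces $2\l f_0(v),v_t\r=\ddt\, 2\l F_0(v),1\r$. This is \emph{not} the same term as $2\delta\l f_0(v),v\r$, which comes from the multiplier $2\delta v$ and is indeed nonnegative by \eqref{mono} and safely discarded. The term $2\l f_0(v),v_t\r$ is a total time derivative of a quantity that is nonnegative but whose \emph{derivative} has no sign, so it cannot be ``simply discarded''; and it cannot be estimated directly either, since $\|f_0(v)\|\lesssim\|v\|_1^2+\|v\|_1^3$ is supercritical with respect to the energy. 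The only option is the one you mention in passing: absorb it into the functional, i.e.\ work with $\E_0=\Phi+2\l F_0(v),1\r$, which is exactly what the paper does.

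But once $2\l F_0(v),1\r$ sits inside the functional, your claimed two-sided bound $\tfrac12 E_0\leq\Phi\leq CE_0$ with a \emph{linear} upper bound breaks down: by \eqref{crez}--\eqref{atzero}, $F_0$ grows quartically, so $\l F_0(v),1\r\lesssim\|v\|_1^4$, and the upper bound is quadratic in $E_0$, just as in \eqref{equivEnergia}. To restore a linear equivalence $\E_0\leq C E_0$ (needed to pass from $\ddt\E_0+\delta E_0\leq 0$ to $\ddt\E_0+c\,\E_0\leq 0$ and hence to exponential decay) one must \emph{first} establish the uniform bound $\|U_0(t,\tau)z\|_{\H_t}\leq C$ along the whole trajectory. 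The paper obtains this as a preliminary step by rerunning the proof of Lemma \ref{l-energy0} with $f_0$ in place of $f$ (its estimate \eqref{vbound}); your proposal skips this step entirely. This is also why the decay rate in the statement is $\delta=\delta(\mathfrak B)$: the constant in the equivalence $\E_0\leq CE_0$ depends on the size $R_0$ of the absorbing set through \eqref{vbound}. Your remark that the rate depends only on $\alpha$ and $\eps$ is a symptom of the same omission: it would follow from the linear equivalence you assert, but that equivalence is not available without the a priori bound. The remaining ingredients of your argument (sign of $\eps'$, absorption of $2\delta\eps'\l v_t,v\r$, absence of the constant $k$ on the right-hand side) are correct and coincide with the paper's.
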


\begin{proof}
Repeating word by word the proof of Lemma \ref{l-energy0} with $f_0$
instead of $f$ we immediately get the bound
\begin{equation}
\label{vbound}
\|U_0(t,\tau)z\|_{\H_t}\leq C.
\end{equation}
Then, denoting
\begin{align*}
\E_0 =& \|U_0(t,\tau)z\|_{\H_t}^2+\delta\alpha \|v\|^2+2\delta\eps\l v_t,v\r+2\l F_0(v),1\r,
\end{align*}
with
$$F_0(s)=\int_0^s f_0(y)\,\d y,$$
we multiply \eqref{PL} by $2v_t+2\delta v$.
In view of \eqref{mono} and since $g=0$,
the analogous of the differential inequality \eqref{e0} now reads
\begin{equation*}
\ddt \E_0 + \delta\|U_0(t,\tau)z\|_{\H_t}^2\leq 0.
\end{equation*}
Exploiting \eqref{vbound} we have
$$\frac12\|U_0(t,\tau)z\|_{\H_t}^2\leq \E_0(t)\leq C\|U_0(t,\tau)z\|_{\H_t}^2,$$
and the Gronwall lemma completes the argument.
\end{proof}

Summing up, the following uniform bound holds
\begin{equation}
\label{TTRI}
\sup_{t\geq \tau}\big[\|U(t,\tau)z\|_\Ht+\|U_0(t,\tau)z\|_\Ht+\|U_1(t,\tau)z\|_\Ht\big]\leq C.
\end{equation}

\begin{lemma}
\label{unterzo}
There exists $M=M({\mathfrak B})>0$ such that
$$
\sup_{t\geq \tau}\|U_1(t,\tau)z)\|_{\H_t^{1/3}}\leq M.
$$
\end{lemma}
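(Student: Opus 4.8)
The plan is to run a higher-order energy estimate for the compact component $\{w,w_t\}=U_1(t,\tau)z$ at the level of $\H^{1/3}_t=\HH_{4/3}\times\HH_{1/3}$, imitating the proof of Lemma~\ref{l-energy0} but replacing the multiplier $2u_t+2\delta u$ by $2A^{1/3}w_t+2\delta A^{1/3}w$. Testing \eqref{PN} against $2A^{1/3}w_t$ and using the self-adjointness of $A$ gives the identity
$$\ddt\big[\eps\|w_t\|_{1/3}^2+\|w\|_{4/3}^2\big]+(2\alpha-\eps')\|w_t\|_{1/3}^2=2\l g-f(u)+f_0(v),\,A^{1/3}w_t\r,$$
while the extra multiplier $2\delta A^{1/3}w$ supplies the coercive term $2\delta\|w\|_{4/3}^2$. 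After the same Young manipulations performed on the cross term $2\delta\eps\l w_t,A^{1/3}w\r$ as in the passage \eqref{equivEnergia}--\eqref{diff-in1}, one is left with a Lyapunov functional $\Lambda$ equivalent to $\|w\|_{4/3}^2+\eps\|w_t\|_{1/3}^2$ satisfying $\ddt\Lambda+2\omega\Lambda\le(\text{forcing})$, with the dissipation $(2\alpha-\eps')\|w_t\|_{1/3}^2\ge 2\alpha\|w_t\|_{1/3}^2$ available to absorb lower-order terms.

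Next I would dispose of the benign parts of the right-hand side. Since $g=g(\x)$ is independent of time, $\l g,A^{1/3}w_t\r=\ddt\l g,A^{1/3}w\r$ can be moved into $\Lambda$, the resulting correction being controlled by $\|g\|\,\|w\|_{2/3}\le\frac14\|w\|_{4/3}^2+C$. The Lipschitz contribution is handled by the Nemytskii bound $\|f_1(u)\|_{1/3}\le C\|u\|_{1/3}\le C$ coming from \eqref{phiuno} together with the uniform estimate \eqref{TTRI}, so that $2\l f_1(u),A^{1/3}w_t\r\le\eta\|w_t\|_{1/3}^2+C$, the first summand being swallowed by $2\alpha\|w_t\|_{1/3}^2$.

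The whole difficulty is concentrated in the critical term $\l f_0(u)-f_0(v),A^{1/3}w_t\r$. Here I would move the time derivative onto $w$, writing
$$\l f_0(u)-f_0(v),A^{1/3}w_t\r=\ddt\l f_0(u)-f_0(v),A^{1/3}w\r-\l f_0'(u)u_t-f_0'(v)v_t,\,A^{1/3}w\r,$$
placing the exact derivative into $\Lambda$ (its correction is again lower order, bounded by $\|f_0(u)-f_0(v)\|\,\|w\|_{2/3}\le C\|w\|_{2/3}$ via the cubic growth \eqref{crez} and $\HH_1\hookrightarrow L^6$). The surviving term $\l f_0'(u)u_t-f_0'(v)v_t,A^{1/3}w\r$ would then be estimated by the fractional chain rule and the Gagliardo--Nirenberg inequalities tuned to the exponent $1/3$ — in particular the embeddings $\HH_{4/3}\hookrightarrow L^{18}$ and $\HH_1\hookrightarrow L^6$ and the pointwise bound $|f_0'(s)|\le C(1+s^2)$ implied by \eqref{crez}--\eqref{atzero} — so as to dominate it by $q(t)\Lambda+k$, where $q(t)$ is built from the dissipation density $\|u_t\|^2$ and the decaying norm of $v$ furnished by Lemma~\ref{DECAY}. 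Feeding $\int_\tau^\infty\|u_t\|^2\,\d y\le I_0$ from \eqref{diss-int} into the Gronwall Lemma~\ref{gronwall} yields $\Lambda(t)\le\Lambda(\tau)\e^{m}\e^{-\omega(t-\tau)}+k\omega^{-1}\e^{m}$, and since $w(\tau)=w_t(\tau)=0$ forces $\Lambda(\tau)=0$ this is exactly the desired uniform bound $\sup_{t\ge\tau}\|U_1(t,\tau)z\|_{\H^{1/3}_t}\le M$.

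The main obstacle is precisely the closure of this critical nonlinear estimate: because $f_0$ has cubic growth, the exponent $1/3$ sits at the borderline of what the $\HH_1$ (hence $L^6$) bounds on $u$ and $v$ can sustain, and a naive H\"older splitting of $\l f_0'(u)u_t,A^{1/3}w\r$ misses closure by exactly a fractional derivative. The delicate point is to arrange the interpolation so that the full fractional derivative falls on $w$, producing only the controllable factor $\|w\|_{4/3}$, while the leftover coefficient is genuinely \emph{time-integrable} — exploiting \eqref{diss-int} for the $u_t$ factor and the exponential decay of Lemma~\ref{DECAY} to absorb the terms carrying $v$, whose higher-order norm is unavailable since the data of \eqref{PL} lie only in $\H_\tau$. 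Obtaining an integrable $q(t)$, rather than a merely bounded one, is what renders Lemma~\ref{gronwall} applicable and constitutes the technical heart of the argument.
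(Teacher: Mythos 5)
Your overall architecture coincides with the paper's: same multiplier $2A^{1/3}w_t+2\delta A^{1/3}w$, same device of absorbing $2\l f(u)-f_0(v)-g,A^{1/3}w\r$ into the Lyapunov functional $\Lambda$ (with the equivalence $\frac12\|U_1z\|^2_{\H_t^{1/3}}\le\Lambda\le 2\|U_1z\|^2_{\H_t^{1/3}}+2C$ justified exactly as you indicate), and the same endgame via Lemma \ref{gronwall} with an integrable $q(t)$ built from $\|u_t\|^2$ (via \eqref{diss-int}) and $\|v\|_1^2$ (via Lemma \ref{DECAY}). Your treatment of $f_1$ through the Nemytskii bound $\|f_1(u)\|_{1/3}\le C\|u\|_{1/3}$ is a legitimate variant of the paper's estimate of $2\l f_1'(u)u_t,A^{1/3}w\r$.

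However, there is a genuine gap at precisely the step you yourself identify as the technical heart. After integrating by parts in time, the surviving term is $\l f_0'(u)u_t-f_0'(v)v_t,A^{1/3}w\r$, and you propose to control the part carrying $v$ by ``the decaying norm of $v$ furnished by Lemma \ref{DECAY}.'' But that lemma only controls $\|U_0(t,\tau)z\|_{\H_t}^2=\|v\|_1^2+\eps(t)\|v_t\|^2$, so the best available bound on the velocity is $\|v_t\|\le C\,\eps(t)^{-1/2}\e^{-\delta(t-\tau)}$. Since $\eps(t)\to 0$, this is \emph{not} square-integrable on $[\tau,\infty)$ uniformly in $\tau$ — indeed for $\eps(t)\sim\e^{-2\delta t}$ (admissible under \eqref{e1}--\eqref{e2}) the product $\eps(t)^{-1/2}\e^{-\delta(t-\tau)}$ does not even decay — so no admissible $q(t)$ can be extracted this way and Lemma \ref{gronwall} is not applicable. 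The missing idea is the algebraic substitution $v_t=u_t-w_t$, which rewrites the critical term as
$$\l [f_0'(u)-f_0'(v)]u_t,A^{1/3}w\r+\l f_0'(v)w_t,A^{1/3}w\r+\l f_1'(u)u_t,A^{1/3}w\r,$$
so that $v_t$ never appears: the first and third pieces are dominated by $C\|u_t\|^2\|w\|_{4/3}^2$ (using $|f_0'(u)-f_0'(v)|\le C(1+|u|+|v|)|w|$ and the H\"older/embedding chain $\HH_{4/3}\subset L^{18}$, $\HH_{2/3}\subset L^{18/5}$), while the second is bounded by $C\|v\|_1\|w_t\|_{1/3}\|w\|_{4/3}\le\frac\alpha2\|w_t\|_{1/3}^2+C\|v\|_1^2\|w\|_{4/3}^2$, the first summand being swallowed by the dissipation $[2\alpha-\eps'-2\delta\eps]\|w_t\|_{1/3}^2$. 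Only then does $q=C\|u_t\|^2+C\|v\|_1^2$ emerge with $\int_\tau^\infty q\,\d y\le C$ uniformly in $\tau$. Without this cancellation your plan stalls at the critical term, so as written the proof is incomplete.
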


\begin{proof}
We choose $\delta>0$ small and $C>0$ large enough such that,
calling
$$
\Lambda=\|U_1(t,\tau)z\|_{\H_t^{1/3}}^2+\delta\alpha\|w\|_{{1/3}}^2
+2\delta\eps\l w_t,A^{{1/3}}w\r+2\langle f(u)-f_0(v)-g,A^{1/3}w\rangle+C,$$
we have
\begin{equation}
\label{DYLAN}
\frac12 \|U_1(t,\tau)z\|_{\H_t^{1/3}}^2\leq \Lambda(t)\leq 2\|U_1(t,\tau)z)\|_{\H_t^{1/3}}^2+2C.
\end{equation}
Indeed, in view of \eqref{TTRI} and the growth of $f$,
\begin{align*}
2\langle f(u)-f_0(v),A^{1/3}w\rangle&\leq 2\|f(u)-f_0(v)\|\|A^{1/3}w\|\leq C\|w\|_{2/3}\leq \frac14\|w\|_{4/3}^2+C.
\end{align*}
Besides, by \eqref{e2}, for $\delta$  small we can estimate
$$2\delta\eps|\l w_t,A^{{1/3}}w\r|\leq  \frac{\eps}{2}\|w_t\|_{1/3}^2+\frac{\delta\alpha}{2}\|w\|_{1/3}^2.$$
By multiplying \eqref{PN} with $2A^{1/3}w_t+2\delta A^{1/3}w$, we infer that
\begin{align*}
\ddt \Lambda&+[2\alpha-\eps'-2\delta\eps]\|w_t\|_{1/3}^2+2\delta
\|w\|_{4/3}^2 +2\delta\l f(u)-f_0(v)-g,A^{1/3}w\r\\&=
2\delta\eps'\l w_t,A^{1/3}w \r+I_1+I_2+I_3,
\end{align*}
where
\begin{align*}
I_1&=2\l [f_0'(u)-f_0'(v)]u_t,A^{1/3}w\r,\\
I_2&=2\l f_0'(v) w_t,A^{1/3}w\r,\\
I_3&=2\l f_1'(u)u_t ,A^{1/3}w\r.\\
\end{align*}
Then, for any fixed $\delta>0$ small enough,
we easily get
\begin{equation}
\label{QQ1}
\ddt \Lambda+\delta \Lambda+\alpha\|w_t\|_{1/3}^2
\leq I_1+I_2+I_3+\delta C.
\end{equation}
By exploiting conditions \eqref{crez}-\eqref{atzero} for $f_0$ and the embeddings
$\HH_{(3p-6)/2p}\subset L^{p}(\Omega)$ ($p>2$),
we draw from~\eqref{TTRI}-\eqref{DYLAN} the estimates
\begin{align*}
I_1&\leq
C\big(1+\|u\|_{L^6}+\|v\|_{L^6}\big)
\|u_t\|\|w\|_{L^{18}}\|A^{1/3}w\|_{L^{18/5}}
\leq C\|u_t\|\|w\|_{4/3}^2\\
&\leq\frac{\delta}{2}\Lambda+C\|u_t\|^2\|w\|_{4/3}^2,\\
\smallskip
I_2&\leq
C\big(\|v\|_{L^6}+\|v\|_{L^6}^2\big)\| w_t\|_{L^{18/7}}
\|A^{1/3} w\|_{L^{18/5}}\leq C\|v\|_1\| w_t\|_{1/3}\|w\|_{4/3}\\
&\leq\frac{\alpha}{2}\|w_t\|_{1/3}^2+C\|v\|_1^2 \|w\|_{4/3}^2.
\end{align*}
Besides, in view of \eqref{phiuno}, we have
$$I_3\leq k\|u_t\|\|A^{1/3}w\|\leq \|u_t\|^2\|w\|_{4/3}^2+C.$$
As a consequence, inequality~\eqref{QQ1} improves to
$$
\ddt \Lambda+\frac{\delta}{2} \Lambda
\leq q \Lambda+C,
$$
with
$q=C\|u_t\|^2+C\|v\|_1^2$ satisfying
$$
\int_{\tau}^{\infty}q(y)\,\d y\leq C,
$$
by virtue of the dissipation integral \eqref{diss-int} and Lemma~\ref{DECAY}.
By Lemma \ref{gronwall},
$$
\Lambda(t)\leq C\Lambda(\tau)\e^{-\frac{\delta}{4} (t-\tau)}+C\leq C.
$$
In turn, \eqref{DYLAN}
yields the boundedness of $U_1(t,\tau)z$ in $\H_t^{1/3}$.
\end{proof}

\subsection{Existence of the invariant attractor}
According to Lemma \ref{unterzo},  we consider the family $\mathfrak K=\{K_t\}_{t\in\R}$
where
$$K_t=\Big\{z\in\H_t^{1/3}: \|z\|_{\H^{1/3}_t}\leq M\Big\}.$$
$K_t$ is compact by the compact embedding $\H^{1/3}_t\Subset\H_t$; besides,
since the injection constants are independent of $t$,
$\mathfrak K$ is uniformly bounded.
Finally, Theorem \ref{t-abset}, Lemma \ref{DECAY} and Lemma \ref{unterzo} show that $\mathfrak K$ is pullback attracting;
indeed,
$$\di_t(U(t,\tau) \BB_\tau(R_0),\K_t)\leq C\e^{-\delta(t-\tau)},\quad\forall t\geq\tau.$$
Hence the process $U(t,\tau)$ is asymptotically compact, which allows
the application of Theorem \ref{MAA} and proves
the existence of the unique time-dependent global attractor $\AA=\{\A_t\}_{t\in\R}$.
The invariance of $\AA$ follows by the abstract Theorem \ref{invariance-2}, due to
the strong continuity of the process stated in Theorem \ref{wellposed}.

\begin{remark}
The attraction exerted by the attractor is uniform on compact intervals of time
by virtue of Proposition \ref{unif-att}, due to the continuous dependence estimate  \eqref{contdep}.
\end{remark}

\subsection{Regularity of the attractor}
The minimality of $\AA$ in $\KK$
establishes that $\A_t\subset K_t$ for all $t\in\R$. Therefore,
we immediately have the following regularity result.
\begin{corollary}
\label{corBdd}
$\A_t$ is bounded in $\H^{1/3}_t$  (with a bound independent of $t$).
\end{corollary}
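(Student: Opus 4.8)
The plan is to obtain the corollary as an immediate consequence of the minimality built into Definition~\ref{d-attractor}, so that no new analytic work is required beyond what Lemma~\ref{unterzo} already provides. First I would record what has just been established, namely that the family $\mathfrak K=\{K_t\}_{t\in\R}$ with
$$K_t=\Big\{z\in\H^{1/3}_t:\|z\|_{\H^{1/3}_t}\leq M\Big\}$$
belongs to $\KK$: each $K_t$ is compact in $\H_t$ by the compact embedding $\H^{1/3}_t\Subset\H_t$, the family is uniformly bounded in $\H_t$ because the injection constants are independent of $t$, and it is pullback attracting thanks to the decaying/compact splitting, i.e.\ Lemma~\ref{DECAY} together with Lemma~\ref{unterzo}.

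Next, since the time-dependent global attractor $\AA$ is by construction the \emph{smallest} element of $\KK$ (Definition~\ref{d-attractor}), I would apply the minimality to this specific $\mathfrak K$, obtaining the inclusion $\A_t\subset K_t$ for every $t\in\R$. Unwinding the definition of $K_t$, this inclusion reads precisely $\|z\|_{\H^{1/3}_t}\leq M$ for all $z\in\A_t$, which is exactly the asserted $\H^{1/3}_t$-regularity of the attractor.

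It then remains only to stress why the bound is uniform in $t$: the radius $M=M(\mathfrak B)$ furnished by Lemma~\ref{unterzo} depends solely on the absorbing family $\mathfrak B$ and not on the time $t$, while the embedding constants entering $\H^{1/3}_t\Subset\H_t$ were arranged to be $t$-independent as well. Hence the single constant $M$ controls $\A_t$ simultaneously for all $t\in\R$, giving a bound independent of $t$.

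There is no genuine obstacle at the level of the corollary itself; the entire difficulty has been front-loaded into Lemma~\ref{unterzo}, where the uniform $\H^{1/3}_t$-bound on the compact component $U_1(t,\tau)z$ is produced via the higher-order energy functional $\Lambda$ and the Gronwall estimate of Lemma~\ref{gronwall}. Once that bound is in hand, the passage to the attractor is purely abstract: it is the same minimality mechanism by which Theorem~\ref{MAA} identifies $\AA$ with the smallest pullback attracting family of compact sets, now specialized to read off its regularity rather than its mere existence.
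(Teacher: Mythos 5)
Your proposal is correct and follows exactly the paper's own argument: the family $\mathfrak K=\{K_t\}_{t\in\R}$ built from Lemma~\ref{unterzo} lies in $\KK$, and the minimality of $\AA$ in $\KK$ immediately gives $\A_t\subset K_t$, i.e.\ the uniform $\H^{1/3}_t$-bound by the $t$-independent radius $M$. Nothing further is needed.
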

To prove that $\A_t$ is bounded in $\H^1_t$, as claimed in Theorem \ref{MAIN-WAVE}, we argue as follows.
We fix $\tau\in\R$ and, for $z\in\A_\tau$,
we split  the solution $U(t,\tau)z$
into the sum $U_0(t,\tau)z+U_1(t,\tau)z$,
where $U_0(t,\tau)z=\{v(t), v_t(t)\}$ and $U_1(t,\tau)z=\{w(t), w_t(t)\}$, instead of \eqref{PL}-\eqref{PN},
now solve
$$
\begin{cases}
\displaystyle
\eps v_{tt}+\alpha v_t+Av=0,\\
U_0(\tau,\tau)=z,
\end{cases}
\quad
\begin{cases}
\displaystyle
\displaystyle
\eps w_{tt}+\alpha w_t+Aw+f(u)=g,\\
U_1(\tau,\tau)=0.
\end{cases}
$$
As a particular case of Lemma~\ref{DECAY}, we learn that
\begin{equation}\label{decay-UNO}
\|U_0(t,\tau)z\|_{\H_t}\leq C\e^{-\delta(t-\tau)},\quad\forall t\geq\tau.
\end{equation}

\begin{lemma}
\label{bd-UNO}
We have the uniform bound
$$
\sup_{t\geq \tau}\|U_1(t,\tau)z\|_{\H^1_t}\leq M_1,
$$
for some $M_1=M_1({\mathfrak A})>0$.
\end{lemma}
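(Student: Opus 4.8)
The plan is to run an energy estimate for the $U_1$-equation $\eps w_{tt}+\alpha w_t+Aw+f(u)=g$ (with $U_1(\tau,\tau)=0$) directly at the $\H^1_t$-level, the essential preliminary being to place the nonlinearity in $\HH_1$ by exploiting the $\H^{1/3}_t$-regularity of the attractor already secured in Corollary \ref{corBdd}. Since $z\in\A_\tau$ and $\AA$ is invariant, the whole trajectory stays on the attractor, $U(t,\tau)z\in\A_t$, so the first component satisfies $\|u(t)\|_{4/3}\leq C$ uniformly in $t\geq\tau$. Using the growth \eqref{phi2}, which yields $|f'(s)|\leq C(1+|s|^2)$, together with the three-dimensional embeddings $\HH_{4/3}\subset L^{18}$ and $\|\nabla u\|_{L^{18/7}}\leq C\|u\|_{4/3}$, I would estimate
$$\|f(u)\|_1=\|\nabla f(u)\|=\|f'(u)\nabla u\|\leq\|f'(u)\|_{L^9}\|\nabla u\|_{L^{18/7}}\leq C\big(1+\|u\|_{L^{18}}^2\big)\|u\|_{4/3}\leq C,$$
where the Hölder exponents close because $\tfrac19+\tfrac{7}{18}=\tfrac12$. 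Note that $f(u)\in\HH_1$ (indeed $f(u)|_{\partial\Omega}=0$ since $u|_{\partial\Omega}=0$ and $f(0)=0$), with a bound independent of $t\geq\tau$ and depending only on $\AA$.

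Granting this, I would multiply the $U_1$-equation by $2Aw_t+2\delta Aw$ and, repeating verbatim the reconstruction carried out in the proofs of Lemma \ref{l-energy0} and Lemma \ref{unterzo}, introduce the functional
$$\Lambda=\|U_1(t,\tau)z\|_{\H^1_t}^2+\delta\alpha\|w\|_1^2+2\delta\eps\l w_t,Aw\r-2\l g,Aw\r+C,$$
which, for $\delta>0$ small and $C$ large, obeys $\tfrac12\|U_1(t,\tau)z\|_{\H^1_t}^2\leq\Lambda\leq 2\|U_1(t,\tau)z\|_{\H^1_t}^2+C$. The source $2\l g,Aw_t\r$ produced by the top-order multiplier is, $g$ being time-independent, nothing but $\ddt(2\l g,Aw\r)$, and is thus absorbed into $\Lambda$; the reconstruction then delivers
$$\ddt\Lambda+\delta\Lambda+\alpha\|w_t\|_1^2\leq 2|\l f(u),Aw_t\r|+2\delta|\l f(u),Aw\r|+2\delta|\l g,Aw\r|.$$

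The crucial, and only genuinely new, point is the handling of the top-order source $2\l f(u),Aw_t\r$. I would deliberately avoid integrating it by parts in time, which would spawn the term $\l f'(u)u_t,Aw\r$ that is out of reach here, since the available $\H^{1/3}_t$-bound on $u_t$ degenerates like $\eps(t)^{-1/2}$ as $t\to+\infty$. Instead, the regularity $f(u)\in\HH_1$ lets me integrate by parts in space,
$$2|\l f(u),Aw_t\r|=2|\l\nabla f(u),\nabla w_t\r|\leq 2\|f(u)\|_1\|w_t\|_1\leq\tfrac{\alpha}{2}\|w_t\|_1^2+C.$$
The two remaining terms are lower order: using $\|w\|_2^2\leq 2\Lambda$ together with the uniform bounds on $\|f(u)\|$ and $\|g\|$, each of $2\delta|\l f(u),Aw\r|$ and $2\delta|\l g,Aw\r|$ is at most $\tfrac{\delta}{4}\Lambda+C$. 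Collecting these estimates yields
$$\ddt\Lambda+\tfrac{\delta}{2}\Lambda\leq C,$$
and, since the vanishing initial datum $U_1(\tau,\tau)=0$ gives $\Lambda(\tau)=C$, integrating this differential inequality provides $\Lambda(t)\leq C\e^{-\frac{\delta}{2}(t-\tau)}+C\leq C$ for all $t\geq\tau$. The equivalence above then gives $\|U_1(t,\tau)z\|_{\H^1_t}\leq M_1$.

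I expect the main obstacle to be exactly the uniform bound $f(u)\in\HH_1$: this is where the $\H^{1/3}_t$-regularity of $\AA$ is indispensable, for in dimension three the embedding $\HH_{4/3}\subset L^{18}$ is just strong enough to land $f'(u)\nabla u$ in $L^2$, and it is precisely this that makes the spatial integration by parts available in place of the obstructed temporal one. Once this is in hand, the remainder is a routine repetition of the energy estimates already performed.
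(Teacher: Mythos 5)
Your proposal is correct and follows essentially the same route as the paper: the same functional $\E_1$ built from the $\H^1_t$-energy plus the $\delta$-corrector and the $-2\l g,Aw\r$ term, the same multiplier $2Aw_t+2\delta Aw$, and the same key bound $\|f(u)\|_1\leq C$ obtained from the invariance of $\AA$, its $\H^{1/3}_t$-regularity, and the embeddings $\HH_{4/3}\subset L^{18}(\Omega)$, $\HH_{1/3}\subset L^{18/7}(\Omega)$, after which $2|\l f(u),Aw_t\r|\leq 2\|f(u)\|_1\|w_t\|_1$ is absorbed by the damping exactly as in the paper. Your explicit remark on why one must integrate by parts in space rather than in time is a sensible gloss on a step the paper performs without comment.
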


\begin{proof}
We set
$$
\E_1=\|U_1(t,\tau)z\|_{\H^1_t}^2+\delta\alpha\|w\|_{{1}}^2
+2\delta\eps\l w_t,A w\r-2\l g,Aw\r+c,$$
for $\delta>0$ small and some $c\geq0$ (depending on $\|g\|$) large enough to ensure
\begin{equation}
\label{DYLAN1}
\frac14 \|U_1(t,\tau)z\|_{\H^1_t}^2\leq \E_1(t)\leq 2 \|U_1(t,\tau)z\|_{\H^1_t}^2+2c.
\end{equation}
A multiplication by $2Aw_t+2\delta Aw$
leads to the equality
\begin{align*}
\ddt \E_1 &+
[2\alpha-\eps'-2\delta\eps]\|w_t\|_{1}^2+2\delta
\|w\|_{2}^2-2\delta\l g,Aw\r\\& =
2\delta\eps'\l w_t,Aw \r
-2\l f(u), Aw_t\r
-2\delta \l f(u),Aw\r,
\end{align*}
and after standard computations we get, for $\delta$ small enough,
\begin{align*}
\ddt \E_1+\delta\E_1 &\leq -2\l f(u), Aw_t\r
-2\delta \l f(u),Aw\r+\delta c.
\end{align*}
Denoting by $C>0$ a {\it generic} constant depending on the size of $\A_t$ in $\H^{1/3}_t$,
we find, using the invariance of the attractor,
$$\|U(t,\tau)z\|_{\H^{1/3}_t}\leq C.$$
Hence, exploiting the embeddings
$\HH_{4/3}\subset L^{18}(\Omega)$ and $\HH_{1/3}\subset L^{18/7}(\Omega)$,
we deduce the bound
$$
\|f(u)\|_1\leq\|f'(u)\|_{L^9}\|A^{1/2}u\|_{L^{18/7}}
\leq C\big(1+\|u\|_{L^{18}}^2\big)\leq C,
$$
yielding
\begin{align*}
-2\l f(u), Aw_t\r-2\delta\l f(u),Aw\r\leq 2\|f(u)\|_1(\|w_t\|_1+\|w\|_1)\leq \frac{\delta}{2}\E_1+C.
\end{align*}
We finally end up with
$$\ddt \E_1+\frac{\delta}{2} \E_1\leq C,$$
and an application of the standard Gronwall lemma,
recalling \eqref{DYLAN1}, provides the uniform boundedness of $\|U_1(t,\tau)z\|_{\H^1_t}$, as claimed.
\end{proof}
We are now in position to conclude the proof of Theorem \ref{MAIN-WAVE}.
Indeed, inequality \eqref{decay-UNO} and Lemma \ref{bd-UNO} imply that, for all $t\in\R$,
$$\lim_{\tau\to-\infty}\di_t(U(t,\tau)\A_\tau, K^1_t)=0,$$
having defined
$$K_t^{1}=\Big\{z\in\H_t^{1}: \|z\|_{\H^1_t}\leq M_1\Big\}.$$
Since $\AA$ is invariant, this means
$$\di_t(\A_t, K^1_t)=0.$$
Hence, $\A_t\subset\overline{K_t^1}=K_t^1$, proving that $\A_t$
is bounded in $\H_t^{1}$ with a bound independent of $t\in\R$.
%%%%%%%%%%%%%%%%%%%%%%%%%%%%%%%%%%%%%%%%%%%%%%%%%%%%%%%%%%%%

\subsection*{Acknowledgments} This work was partially supported by the National Science
Foundation under the grants NSF-DMS-1206438, NSF-DMS-0906440, and by the
Research Fund of Indiana University.

%%%%%%%%%%%%%%%%%%%%%%%%%%%%%%%%%%%%%%%%%%%%%%%%%%%%%%%%%%%%

%%%%%%%%%%%%%%%%%%%%%%%%%%%%%%%%%%%%%%%%%%%%%%%%%%%%%%%%%%%%

\end{document}